\documentclass[letterpaper, 11pt, reqno]{amsart}

\usepackage{amsmath,amssymb,amscd,amsthm,amsxtra,esint}
\usepackage{mathtools}
\usepackage{tikz}
\usepackage{hyperref}
\usepackage{cite}

\allowdisplaybreaks[2]
\newtheorem{theorem}{Theorem}[section]
\newtheorem{maintheorem}{Theorem}
\newtheorem{lemma}[theorem]{Lemma}
\newtheorem{proposition}[theorem]{Proposition}
\newtheorem{remark}[theorem]{Remark}

\newtheorem*{ackno}{Acknowledgments}

\newcommand{\N}{\mathbb N}
\newcommand{\C}{\mathbb C}
\newcommand{\T}{\mathbb T}
\newcommand{\R}{\mathbb R}
\newcommand{\Z}{\mathbb Z}
\newcommand{\la}{\langle}
\newcommand{\ra}{\rangle}
\newcommand{\norm}[1]{\|#1\|}
\newcommand{\abs}[1]{|#1|}
\newcommand{\FL}{\mathcal F L}
\newcommand{\Fx}{\mathcal F_x}

\numberwithin{equation}{section}
\numberwithin{theorem}{section}

\title[Norm inflation for the cubic HNLS on $\T^2$]
{Norm inflation for the cubic hyperbolic NLS on $\T^2$}

\author{Shunlin Shen}
\address{Shunlin Shen,
School of Mathematical Sciences, University of Science and Technology of China\\
Hefei, 230026\\
China}
\email{slshen@ustc.edu.cn}

\author{Yuzhao Wang}
\address{Yuzhao Wang,
School of Mathematical Sciences, Dalian University of Technology\\
Dalian, 116024\\
China;\\
School of Mathematics, University of Birmingham\\
Watson Building, Edgbaston\\
Birmingham B15 2TT\\
UK}
\email{wangyuzhao2008@gmail.com}

\subjclass[2020]{35Q55, 35B30}

\keywords{hyperbolic nonlinear Schr\"odinger equation; norm inflation;
ill-posedness; periodic solutions; sharp range}

\date{}

\begin{document}

\begin{abstract}
We prove norm inflation for the cubic hyperbolic nonlinear
Schr\"odinger equation in $H^s(\mathbb T^2)$
for every $s\in(-\infty,0)\cup(0,\frac12]$. The scaling-critical point $s=0$ is excluded by conservation of the
$L^2$ norm. The strong ill-posedness below and above the scaling-critical point arises from two completely different mechanisms. Particularly in the scaling-subcritical regime, this dynamical instability stems from the hyperbolic nature.
Together with the local well-posedness result in \cite{WangHNLS}, this
gives a sharp dichotomy away from the mass space $L^2(\mathbb T^2)$:
local well-posedness holds for $s>\frac12$, whereas norm inflation
occurs for all $s\le \frac12$ with $s\ne0$.
\end{abstract}

\maketitle

\section{Introduction}

We consider the cubic hyperbolic nonlinear Schr\"odinger equation (HNLS) on the
two-dimensional torus
\begin{equation}\label{eq:HNLS}
\begin{cases}
  i\partial_t u+(\partial_x^2-\partial_y^2)u+\abs{u}^2u=0, \\
 u|_{t = 0} = u_0,
  \end{cases}
   \qquad (t,x,y)\in\R\times\T^2,
\end{equation}
where $\T=\R/2\pi\Z$.

Hyperbolic Schr\"{o}dinger operators arise naturally in a variety of physical settings where the underlying geometry dictates an indefinite signature.
In
modulation theory, NLS-type equations describe the slowly varying envelope of
an oscillatory wave packet, and the second-order part of the envelope equation
is governed by the Hessian of the dispersion relation of the underlying
physical system.  When this Hessian is indefinite, the resulting envelope
equation is hyperbolic rather than elliptic. This situation appears for
deep-water gravity waves, for self-focusing wave packets in plasma and
nonlinear optics, and in related modulation models
\cite{AS79,B98,BK96,BR96,TotzHNLS}. It also connects HNLS with nonlocal
NLS-type systems such as the Davey--Stewartson and Ishimori systems, where
mixed elliptic-hyperbolic signatures enter either through the Schr\"odinger
operator or through the coupled mean-field equation
\cite{GhidagliaSautDS,HayashiSautDSIshimori,SW24}. In the water-wave setting,
the modulation approximation leading to HNLS has been rigorously justified
in important two- and three-dimensional regimes
\cite{TotzWuWaterWaves,TotzWaterWaves}. Additionally,
hyperbolic Schr\"{o}dinger operators also play a crucial role in kinetic theory. In fact, upon Fourier transformation in the velocity variable, the kinetic transport operator can be viewed as a hyperbolic Schr\"{o}dinger operator, as evidenced by recent dispersive analyses of the Boltzmann equation \cite{BSTW26,CDP19,CH24,CSZ24,CSZ26}.

Despite possessing one of the simplest structures among nonlinear dispersive equations, the cubic HNLS on
$\mathbb{T}^{2}$ displays analytical properties entirely distinct from those of the elliptic NLS. The main goal of this paper is to investigate how the non-elliptic structure and its associated dispersive effects intrinsically induce pathological behaviors of the solutions.

A natural starting point for understanding the analytic behavior of \eqref{eq:HNLS} is its scaling invariance. As with the elliptic NLS, the corresponding equation on $\R^2$ enjoys the scaling symmetry
\begin{align}\label{equ:nls, scaling}
u_{\lambda}(t,x)=\lambda u(\lambda^{2}t,\lambda x),\quad \lambda>0,
\end{align}
which yields the scaling-critical regularity $s_{c}=0$.
Based on standard scaling heuristics, initial data with regularity below this threshold should lead to ill-posedness, whereas regularity at or above this threshold suggests well-posedness. On the whole space $\mathbb{R}^{2}$, this heuristic indeed holds true.
Since the dispersive estimates for the hyperbolic Schr\"{o}dinger operator coincide with those for the elliptic case, the equation \eqref{eq:HNLS} is locally well-posed for $s\geq 0$; see \cite{TotzHNLS}.

On the torus, however, the situation is totally different. The mixed signature of the operator symbol gives rise to dispersive phenomena that are distinct from those in the elliptic NLS, as seen in \cite{BasakogluWangHNLS,BD15,BD17,DFZ25,GS1,GS2,GT12,MT15,SW24}. This difference is clearly reflected in the well-posedness regularity. For the cubic HNLS on $\mathbb{T}^{2}$, local well-posedness is only known for $s>\frac{1}{2}$ in \cite{WangHNLS}, far away from the scaling-critical regularity $s_{c}=0$. For $s \le \frac12$, it is also shown in \cite{WangHNLS,LiuZhengHNLS} that the solution map is not $C^3$. Failure of $C^3$ is ill-posedness in a weak sense, but it does not imply discontinuity of the solution map. 

This sharp threshold for $C^3$ regularity of the solution map, occurring strictly above the scaling-critical threshold, naturally raises a fundamental problem regarding whether the non-elliptic structure genuinely induces a new critical threshold at $s=\frac{1}{2}$ and whether the solutions exhibit pathological behavior below this threshold.
This paper provides an affirmative answer by establishing norm inflation,
  which serves as a mechanism of strong ill-posedness.

Let us recall the terminology of norm inflation in $H^s(\T^2)$
at the origin. This phenomenon occurs if, for every $\varepsilon>0$, there exist smooth initial data
$u_0$ and a time $t_\varepsilon\in(0,\varepsilon)$ such that
\[
  \norm{u_0}_{H^s(\T^2)}<\varepsilon,
  \qquad
  \norm{u(t_\varepsilon)}_{H^s(\T^2)}>\varepsilon^{-1},
\]
where $u$ is the corresponding smooth solution.  This is a much stronger statement than the failure of continuity of the solution map at the origin, capturing a severe dynamical instability where arbitrarily small initial data yield arbitrarily large norms in arbitrarily short times.

%

Our main result provides a complete characterization of this pathological behavior.
\begin{maintheorem}[Norm inflation]\label{thm:main}
The Cauchy problem \eqref{eq:HNLS} exhibits norm inflation at
the origin in $H^s(\T^2)$ if and only if
$s\in(-\infty,0)\cup(0,\frac12]$.
\end{maintheorem}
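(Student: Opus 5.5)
The "only if" direction is quick, and I would dispose of it first. For $s>\frac12$, the local well-posedness of \cite{WangHNLS} gives a continuous data-to-solution map on a ball of $H^s(\T^2)$, with existence time bounded below. Hence small data produce small solutions on short time intervals, which rules out norm inflation. For $s=0$, the mass $\norm{u(t)}_{L^2}=\norm{u_0}_{L^2}$ is conserved, so norm inflation is impossible there as well. It therefore remains to construct inflating solutions for $s<0$ and for $0<s\le\frac12$. I would do this with a power-series (Picard iteration) expansion $u=\sum_{k\ge0}U_k(u_0)$ in the Wiener algebra $\FL^1$, in the style of Oh and Kishimoto. Here $U_0=e^{it(\partial_x^2-\partial_y^2)}u_0$, and $U_k$ is the $k$-th multilinear Duhamel term, satisfying $\norm{U_k(t)}_{\FL^1}\le (Ct)^{k}\norm{u_0}_{\FL^1}^{2k+1}$. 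This series converges for $t\ll\norm{u_0}_{\FL^1}^{-2}$, and norm inflation follows once we show that the first nonlinear term $U_1$ dominates in $H^s$ both $U_0$ and the tail $\sum_{k\ge2}U_k$.

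For $s<0$ I would use the standard high-to-low transfer. Take $\widehat{u_0}=A\,N^{-s}$ times indicators of a few unit boxes at frequencies of size $N$, for example $\{(N,0),(2N,0),(N,N)\}$, arranged so that a resonant combination $n_1-n_2+n_3$ lands near the origin. Then $\norm{u_0}_{H^s}\sim A$, while $\norm{u_0}_{\FL^1}\sim AN^{-s}$. On the short time $t\ll 1$ the phases play no role, and $\norm{U_1(t)}_{H^s}\gtrsim t(AN^{-s})^3$, coming from the low-frequency output. The tail is bounded by $\sum_{k\ge2}(t A^2N^{-2s})^{k}AN^{-s}$. Choosing $A=(\log N)^{-1}$ and $t$ with $tA^2N^{-2s}$ a small negative power of $\log N$ gives $\norm{U_1}_{H^s}\to\infty$ while the other contributions stay controlled. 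This is routine and well known.

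The main obstacle is the range $0<s\le\frac12$, where the hyperbolic structure must be used. Here I would exploit the null directions of the symbol $\Phi(\xi,\eta)=\xi^2-\eta^2$, which vanishes identically along the lines $\xi=\pm\eta$. Take data supported on a long segment of the diagonal, say $\widehat{u_0}(n,n)=A N^{-s}M^{-1/2}$ for $N\le n\le N+M$. Every point of this support has zero phase, so the linear flow is trivial on it and the cubic interactions within the support are exactly resonant. I expect $U_1$ to behave like the ODE $i\partial_tU=-|U|^2U$ acting coherently, with output spread along the diagonal, including frequencies $(n,n)$ with $n$ ranging over an interval of length $\sim M$ but shifted. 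The aim is to arrange the support so that outputs land at frequencies of size $\gtrsim M$ near the diagonal, or so that the coherent $M^{1/2}$-type amplification from convolving one-dimensional sets yields the gain. The count I would aim for is
\[
\norm{U_1(t)}_{H^s}\gtrsim t\,A^3N^{-3s}M^{3/2}\cdot(\text{output weight}).
\]
Against this, the tail must be controlled in $\FL^1$ or, better, in an $\ell^2$-based algebra adapted to lines, where the bound $\norm{u_0}_{\FL^1}\sim AN^{-s}M^{1/2}$ is too lossy. The delicate point is balancing the three parameters $A,N,M$ so that $\norm{U_1}_{H^s}\to\infty$, while the higher iterates and the linear part stay small, with exact equality permitted at $s=\frac12$. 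I would first try to take the data as a sum of two diagonal segments at heights $N$ and $2N$, which sends low-to-high outputs to frequency $\sim 2N$ along the diagonal. I would then verify the threshold $N^{2s}\lesssim M$, which should reproduce precisely the $C^3$-failure computation of \cite{WangHNLS,LiuZhengHNLS} at $s\le\frac12$, and upgrade it to inflation by inserting logarithmic factors in $A$.
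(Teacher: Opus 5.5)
Your ``only if'' direction is the same as the paper's. Both constructive parts, however, have problems.

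\textbf{The case $s<0$.} The claim that ``on the short time $t\ll1$ the phases play no role'' is false for your frequencies. The interaction $(N,0)-(2N,0)+(N,0)$ that produces the zero mode has $\Phi=2N^2$. Its contribution to $\widehat{U_1}(t,0)$ is $a^3\int_0^te^{it'\Phi}\,dt'$ with $a=AN^{-s}$, which is $\lesssim a^3N^{-2}$ once $tN^2\gtrsim1$. With your choices $A=(\log N)^{-1}$ and $tA^2N^{-2s}=(\log N)^{-\delta}$, you get $tN^2=(\log N)^{2-\delta}N^{2+2s}\to\infty$ for every $s\ge-1$. So the argument as written covers only $s<-1$.

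This is why the paper, following Oh--Wang, uses boxes of side $R\sim N^\theta$ with $\theta\in(1+s,1)$. The boxes raise $\norm{\phi_N}_{\FL^1}$ relative to $\norm{\phi_N}_{H^s}$, so that $T_NN^2\to0$. Alternatively, you can place the frequencies on the null diagonal, e.g.\ at $(N,N)$ and $(2N,2N)$. There every phase vanishes identically and your time restriction disappears.

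\textbf{The case $0<s\le\frac12$.} Here the gap is structural: the strategy ``$U_1$ dominates $U_0$ and a convergent tail'' cannot produce inflation for $s\ge0$.

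For the $\FL^1$ expansion you propose, note that on $n=\sum_j\pm n_j$ one has $\la n\ra^s\le(2k+1)^s\max_j\la n_j\ra^s$. Young's inequality then gives $\norm{U_k(t)}_{H^s}\le(Ct\norm{u_0}_{\FL^1}^2)^k\norm{u_0}_{H^s}$. So in the regime $t\norm{u_0}_{\FL^1}^2\ll1$ where your series converges, $\norm{u(t)}_{H^s}\lesssim\norm{u_0}_{H^s}$.

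For your diagonal data, no other algebra helps. There the flow is exactly $u_0e^{it\abs{u_0}^2}$, with $U_k=\frac{(it)^k}{k!}\abs{u_0}^{2k}u_0$. The pointwise bound $\abs{ae^{it\abs{a}^2}-be^{it\abs{b}^2}}\le(1+2t\norm{u_0}_{L^\infty}^2)\abs{a-b}$, combined with Lemma~\ref{lem:Hs-difference}, gives $\norm{u(t)}_{\dot H^s}\lesssim(1+t\norm{u_0}_{L^\infty}^2)\norm{u_0}_{\dot H^s}$. Hence inflation forces $t\norm{u_0}_{L^\infty}^2\gtrsim\varepsilon^{-2}$. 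In that regime the series is not dominated by $k=1$: the tail equals $-U_1+u_0(e^{it\abs{u_0}^2}-1)$ exactly, so it nearly cancels $U_1$ wherever $t\abs{u_0}^2\gg1$.

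The count for your segment data shows the same thing. For $M\lesssim N$ one gets $\norm{U_1}_{H^s}/\norm{u_0}_{H^s}\sim tA^2N^{-2s}M\sim t\norm{u_0}_{L^\infty}^2$, so the gain is exactly the expansion parameter. The $C^3$ failure in \cite{WangHNLS,LiuZhengHNLS} only says that this ratio is unbounded on the unit ball of $H^s$ at fixed $t$. Logarithmic factors in $A$ cannot decouple the gain from the expansion parameter.

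\textbf{The missing idea.} What is missing is a nonperturbative lower bound for the exact null solution $\rho\phi(x-y)e^{it\rho^2\abs{\phi(x-y)}^2}$ in the large-phase regime.

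For $s<\frac12$, the paper takes $\phi=N^{-1/2-s}D_N$ and works on an interval of length $\sim N^{-1}$ beside the peak, where $-\partial_z\abs{\phi}^2\sim N^{2-2s}$. A single shift $h\sim(t\rho^2N^{2-2s})^{-1}$ moves the phase by a fixed amount while the amplitude barely changes. Lemma~\ref{lem:fixed-increment} then gives $\norm{u(t)}_{H^s}\gtrsim\rho(t\rho^2N^{1-2s})^s$.

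At $s=\frac12$ this power gain disappears. The paper instead uses $(\log N)^{-1/2}\sum_{k\le N}e^{ikz}/k$. It accumulates phase increments of size $\sim Q/\log N$, with $Q=t\rho^2\log N$ large, over $\sim\log N$ logarithmic scales in the Gagliardo double integral.
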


\begin{remark}\normalfont
\label{RMK:sharpness}
The second author~\cite{WangHNLS} proved analytic local well-posedness of the
cubic HNLS on $\T^2$ in $H^s(\T^2)$ for $s>\frac12$, thereby effectively precluding norm inflation in this regime. On the other hand,
\cite{WangHNLS} also proved ill-posedness for $s<\frac{1}{2}$ in the weaker sense that the solution map fails
to be $C^3$ at the origin.  Liu and Zheng~\cite{LiuZhengHNLS} proved an
analogous failure of $C^3$ regularity for the cubic HNLS on $\T^2$ at the
endpoint $H^{1/2}$. 
\end{remark}

\begin{remark}\rm
The mechanism in Theorem~\ref{thm:main} is robust and extends beyond the two-dimensional cubic HNLS considered here. 
Indeed, our two-dimensional construction can be embedded into the three-dimensional HNLS. 
Together with the local well-posedness result for $s>\frac12$ in \cite{LiuZhengHNLS}, this
gives a complete characterization of the norm-inflation range for the HNLS on $\mathbb T^3$, namely $s\in(-\infty,0)\cup(0,\frac12]$.

The same null-direction mechanism also applies to several related hyperbolic models. In particular, it yields norm inflation for Davey--Stewartson type systems on $\mathbb T^2$ \cite{LZ26,Godet13}, as well as for the quintic HNLS on $\mathbb T^2$ and the cubic HNLS on $\mathbb T^4$ with hyperbolic Laplacian
$\partial_{x_1}^2+\partial_{x_2}^2-\partial_{x_3}^2-\partial_{x_4}^2$,
both considered in \cite{LZ26,BasakogluWangHNLS}. 
Together with the corresponding well-posedness theories established in
\cite{LZ26,Godet13,BasakogluWangHNLS}, these results identify the sharp
norm-inflation ranges for the above models.
In the four-dimensional case, restricting to the null plane $(x_1,x_2,x_3,x_4) \mapsto (x_1+x_3,x_2+x_4)$
reduces the equation exactly to the pointwise cubic ODE, thereby yielding norm inflation at the critical regularity $H^1(\T^4)$.
\end{remark}

The novelty of Theorem~\ref{thm:main} lies in deepening the characterization of ill-posedness for the hyperbolic Schr\"{o}dinger equation from topological discontinuity of the solution map to the dynamical ill-posed behavior of the solutions themselves. Previous works \cite{WangHNLS, LiuZhengHNLS} proved that the solution map is not $C^{3}$ at the origin, which is essentially an irregularity of the data-to-solution correspondence at the level of functional analysis and does not necessarily imply uncontrolled behavior of the physical solutions. Norm inflation, by contrast, directly points to an intrinsic dynamical instability.
Even for arbitrarily small initial perturbations,
the interplay between nonlinearity and hyperbolic dispersion can drive the Sobolev norm to arbitrarily large values within an arbitrarily short time. Consequently, Theorem~\ref{thm:main} confirms that throughout the interval $s\leq \frac{1}{2}$, except for the mass-conserved endpoint $s=0$, the equation \eqref{eq:HNLS} not only possesses a highly rough solution map but also admits solutions with uncontrollable pathological behavior.

A particularly notable feature of Theorem~\ref{thm:main} is the discontinuous nature of the ill-posedness phase diagram arising from the hyperbolic structure.
 Based on scaling heuristics, the scaling-critical index $s=0$ should serve as the natural threshold separating well-posedness from ill-posedness.
 Even in some elliptic cases, the actual threshold may lie strictly above the scaling-critical value, as seen in the work \cite{KPV01} of Kenig, Ponce, and Vega. However, the current hyperbolic setting falls outside both expectations. Here,
 $s=0$ acts as an isolated point strictly protected by mass conservation and remains entirely free from norm inflation. Hence the ill-posedness phase diagram becomes unusual, with norm inflation absent exactly at the scaling-critical regularity and strong ill-posedness appearing on both sides. This feature, driven by the hyperbolic structure, stands in sharp contrast to the elliptic NLS.

\vspace{1em}

\noindent\textbf{Outline of the proof}.
The study of norm inflation was developed systematically by Christ,
Colliander, and Tao
\cite{CCT2b}, who proved norm inflation for nonlinear Schr\"odinger and wave
equations below the scaling critical regularity. 
A closely related approach proves ill-posedness through
high-to-low transfer in the first Picard iterate; see Bejenaru--Tao
\cite{BT}, Iwabuchi--Ogawa \cite{IO}, Kishimoto \cite{Kishimoto}, and Oh--Wang \cite{OhWang2015}. 
Further
norm-inflation phenomena for periodic NLS in Fourier-Lebesgue spaces were
obtained in \cite{CK}. 
In a different geometric direction, Burq,
G\'erard, and Tzvetkov \cite[Appendix]{BGT2} used dispersionless dynamics to
prove an instability in $H^1$ for super-quintic NLS on three-dimensional
compact manifolds.

The negative
regularity part $s<0$ in
Theorem~\ref{thm:main}, which falls below the scaling-critical index, fits the familiar
scaling-supercritical philosophy behind norm-inflation results discussed above. In that
range, the proof uses a standard high-to-low transfer mechanism: high-frequency
waves generate a large low-frequency component on a short time scale. See
Section~\ref{sec:negative} for further details.

The positive regularity part $s\in(0,\frac12]$ of
Theorem~\ref{thm:main} is of a different nature.
This range is scaling-subcritical, so scaling heuristics alone do not predict
norm inflation. The mechanism is not the usual scaling-supercritical
high-frequency cascade. Rather, it comes from the hyperbolic nature of
\eqref{eq:HNLS}. In fact, the symbol $\xi_1^2-\xi_2^2$ has a null direction, along which the equation reduces exactly to the pointwise ODE.  This
null-direction mechanism gives norm inflation for $0<s\le\frac12$.  More
precisely, we use exact solutions concentrated on the null direction $x-y$.
For $0<s<\frac12$, the data are normalized Dirichlet kernels
\[
  D_N(z):=\sum_{\abs k\le N}e^{ikz}
  =\frac{\sin((N+\frac12)z)}{\sin(z/2)}
\]
and the large parameter is the power gain $N^{1-2s}$.  At the endpoint
$s=\frac12$, this power gain disappears; we instead use the logarithmic profile
\[
  F_N(z)=\sum_{k=1}^N\frac{e^{ikz}}k,
\]
which has bounded $\dot H^{1/2}$ size after normalization but retains a
logarithmic pointwise height.  The endpoint lower bound works directly with the
full exponential phase and accumulates over logarithmic spatial scales.
See Sections~\ref{sec:subcritical} and~\ref{sec:critical} for further details.

\vspace{1em}

\noindent\textbf{Organization of the paper}.
In Section~\ref{sec:prep}, we collect some tools that will be used throughout the paper.  Section~\ref{sec:negative}
proves norm inflation in the negative regularity range $s<0$ by comparing the
HNLS interaction representation with the pointwise ODE flow.  In
Section~\ref{sec:subcritical}, we prove the positive subcritical case
$0<s<\frac12$ using normalized Dirichlet kernels along a null direction.
Finally, Section~\ref{sec:critical} treats the endpoint $s=\frac12$ using a
logarithmic Fourier profile and the double-integral characterization of
$\dot H^{1/2}(\T)$.

\section{Preparation}\label{sec:prep}

We begin by fixing notation and recording the estimates used in the three
constructions below. For nonnegative quantities $X$ and $Y$, we write
$X\lesssim Y$ if $X\le CY$ for a constant $C>0$, and $X\sim Y$ if both
$X\lesssim Y$ and $Y\lesssim X$. A subscript indicates the permitted
dependence of the implicit constant; for example, $X\lesssim_s Y$. Unless
stated otherwise, all implicit constants are independent of the large
frequency parameters.

For $d=1,2$, we write the Fourier series on $\T^d$ as
\[
  f(x)=\sum_{n\in\Z^d}\widehat f(n)e^{in\cdot x},
  \qquad
  \widehat f(n)=\frac{1}{(2\pi)^d}\int_{\T^d}f(x)e^{-in\cdot x}\,dx.
\]
When the expression inside the Fourier transform is long, we also write
\[
  (\Fx f)(n):=\widehat f(n).
\]
For $s\in\R$, the inhomogeneous Sobolev norm is
\[
  \norm{f}_{H^s(\T^d)}^2
  :=\sum_{n\in\Z^d}\la n\ra^{2s}\abs{\widehat f(n)}^2,
  \qquad
  \la n\ra=(1+\abs{n}^2)^{1/2}.
\]
For $s>0$, we also write the homogeneous Sobolev norm
\[
  \norm{f}_{\dot H^s(\T^d)}^2
  :=\sum_{n\in\Z^d\setminus\{0\}}\abs{n}^{2s}\abs{\widehat f(n)}^2.
\]
For $s>0$, the inhomogeneous and homogeneous Sobolev norms satisfy
\[
\norm{f}_{H^s(\T^d)} \sim  \norm{f}_{L^2(\T^d)} +   \norm{f}_{\dot H^s(\T^d)}.
\]

\subsection{Fourier-Lebesgue estimates}

For $1\le p\le\infty$, define the Fourier-Lebesgue norm by
\[
  \norm{f}_{\FL^p(\T^d)}
  :=
  \norm{\widehat f}_{\ell^p(\Z^d)}.
\]
Thus, for $1\le p<\infty$,
\[
  \norm{f}_{\FL^p(\T^d)}
  =
  \bigg(\sum_{n\in\Z^d}\abs{\widehat f(n)}^p\bigg)^{1/p},
\]
while
\[
  \norm{f}_{\FL^\infty(\T^d)}
  =
  \sup_{n\in\Z^d}\abs{\widehat f(n)}.
\]
In particular, the two endpoint cases used frequently below are
\[
  \norm{f}_{\FL^1(\T^d)}=\sum_{n\in\Z^d}\abs{\widehat f(n)},
  \qquad
  \norm{f}_{\FL^\infty(\T^d)}=\sup_{n\in\Z^d}\abs{\widehat f(n)}.
\]
We suppress the domain $\T^d$ from the notation when no confusion can arise.
The elementary convolution estimates
\begin{align}
  \norm{fg}_{\FL^1}&\le \norm{f}_{\FL^1}\norm{g}_{\FL^1},\label{eq:FL1-alg}\\
  \norm{fg}_{\FL^\infty}
  &\le \norm{f}_{\FL^\infty}\norm{g}_{\FL^1}\label{eq:FLinf-alg}
\end{align}
will be used repeatedly.
We also need their trilinear counterpart. Suppose
that
$\abs{m(n,n_1,n_2,n_3)}\le1$ and
\[
  \big(\Fx\mathcal N_m(f_1,f_2,f_3)\big)(n)
  :=
  \sum_{n=n_1-n_2+n_3}
  m(n,n_1,n_2,n_3)
  \widehat f_1(n_1)\overline{\widehat f_2(n_2)}\widehat f_3(n_3),
\]
where $\mathcal F_x$ denotes the spatial Fourier transform,
then
\begin{align}
  \norm{\mathcal N_m(f_1,f_2,f_3)}_{\FL^1}
  &\le
  \norm{f_1}_{\FL^1}\norm{f_2}_{\FL^1}\norm{f_3}_{\FL^1},
  \label{eq:tri-FL1}\\
  \norm{\mathcal N_m(f_1,f_2,f_3)}_{\FL^\infty}
  &\le
  \norm{f_1}_{\FL^\infty}\norm{f_2}_{\FL^1}\norm{f_3}_{\FL^1}.
  \label{eq:tri-FLinf}
\end{align}
The same $\FL^\infty$ bound holds with the $\FL^\infty$ norm placed on any
one of the three factors.  These estimates follow directly by taking absolute
values in the defining convolution sums.

\begin{lemma}[Short-time Wiener bounds]\label{lem:wiener-bounds}
Let $\phi\in \FL^1(\T^2)$ and set
\[
  M:=\norm{\phi}_{\FL^1(\T^2)},\qquad
  A:=\norm{\phi}_{\FL^\infty(\T^2)}.
\]
Suppose that $v$ is a smooth function whose Fourier coefficients satisfy,
for $0\le t\le T$,
\[
  \widehat v(n,t)=\widehat\phi(n)
  +i\int_0^t
  \sum_{n=n_1-n_2+n_3}
  m(\bar n,t')
  \widehat v(n_1,t')\overline{\widehat v(n_2,t')}
  \widehat v(n_3,t')\,dt',
\]
where $\bar n=(n,n_1,n_2,n_3)$ and $\abs{m(\bar n,t')}\le1$.  If
$TM^2\le c_0$ for a sufficiently small absolute constant $c_0>0$, then
\[
  \sup_{0\le t\le T}\norm{v(t)}_{\FL^1}\lesssim M,
  \qquad
  \sup_{0\le t\le T}\norm{v(t)}_{\FL^\infty}\lesssim A,
\]
and, for $0\le t\le T$,
\[
  \norm{v(t)-\phi}_{\FL^1}\lesssim tM^3,
  \qquad
  \norm{v(t)-\phi}_{\FL^\infty}\lesssim tAM^2.
\]
\end{lemma}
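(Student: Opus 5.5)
The plan is a continuity (bootstrap) argument in $\FL^1$, followed by a second pass in $\FL^\infty$ and then estimates for the differences. Write the equation as
\[
v(t)=\phi+i\int_0^t\mathcal N_{m(\cdot,t')}(v,v,v)(t')\,dt'
\]
on the Fourier side. Here $\mathcal N_m$ is the trilinear operator whose convolution estimates \eqref{eq:tri-FL1} and \eqref{eq:tri-FLinf} are recorded above; they hold uniformly in $t'$ because $\abs{m}\le1$.

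\emph{Step 1: $\FL^1$ bound.} Set $X(t):=\sup_{0\le t'\le t}\norm{v(t')}_{\FL^1}$. Since $v$ is smooth, I will assume (as is implicit for smooth solutions on $\T^2$ with finitely or rapidly decaying modes) that $X$ is finite and continuous in $t$, with $X(0)=M$. Taking $\FL^1$ norms in the Duhamel formula and applying \eqref{eq:tri-FL1} gives
\[
X(t)\le M+tX(t)^3.
\]
Consider the set of $t\in[0,T]$ with $X(t)\le 2M$. On it we get $X(t)\le M+8TM^3\le M(1+8c_0)<2M$ once $c_0<1/8$. A standard continuity argument then gives $X(T)\le 2M$. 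This is the only step requiring any care, namely justifying continuity of $X$. If that is delicate, I will instead argue with the finite-dimensional truncation or directly from smoothness, since the sum $\sum_n\abs{\widehat v(n,t)}$ converges uniformly in $t$.

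\emph{Step 2: $\FL^1$ difference.} Using \eqref{eq:tri-FL1} again with the bound from Step 1,
\[
\norm{v(t)-\phi}_{\FL^1}\le\int_0^t\norm{v(t')}_{\FL^1}^3\,dt'\le 8tM^3.
\]

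\emph{Step 3: $\FL^\infty$ bounds.} Let $Y(t):=\sup_{t'\le t}\norm{v(t')}_{\FL^\infty}$. By \eqref{eq:tri-FLinf}, placing $\FL^\infty$ on one factor and $\FL^1$ on the other two, together with Step 1,
\[
Y(t)\le A+4tM^2Y(t)\le A+4c_0Y(t),
\]
so $Y(T)\le 2A$ for small $c_0$. No bootstrap is needed here because the inequality is linear in $Y$ and $Y$ is finite, since $Y\le X$. The same estimate then yields
\[
\norm{v(t)-\phi}_{\FL^\infty}\le\int_0^t 4M^2\,2A\,dt'=8tAM^2.
\]
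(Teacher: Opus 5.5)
Your proposal is correct and follows essentially the same route as the paper: a bootstrap for $X(t)$ in $\FL^1$ via \eqref{eq:tri-FL1}, then the $\FL^\infty$ bound via \eqref{eq:tri-FLinf}, then the two increment estimates. The only real difference is that you close the $\FL^\infty$ bound by absorbing $4tM^2Y(t)\le 4c_0Y(t)$ (legitimate since $Y\le X<\infty$), where the paper uses Gronwall.

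One small omission: your strict improvement $M(1+8c_0)<2M$ requires $M>0$. The degenerate case $\phi=0$, where $TM^2\le c_0$ places no restriction on $T$, must be treated separately, as the paper does. For instance, $X(t)\le tX(t)^3$ together with continuity and $X(0)=0$ forces $X\equiv0$ on $[0,T]$.
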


\begin{proof}
Let
\[
  X(t):=\sup_{0\le t'\le t}\norm{v(t')}_{\FL^1}.
\]
If $M=0$, then $\phi=0$, and \eqref{eq:tri-FL1} gives
$X(t)\le CtX(t)^3$.  A standard continuity argument then yields
$v\equiv0$ on $[0,T]$.  Hence we may assume that $M>0$.

By \eqref{eq:tri-FL1}, for every $0\le \tau\le t\le T$,
\[
  \norm{v(\tau)}_{\FL^1}
  \le M+C\int_0^\tau\norm{v(t')}_{\FL^1}^3\,dt'
  \le M+CtX(t)^3.
\]
We close this estimate by a bootstrap.  If $X(t)\le2M$, then, since
$tM^2\le TM^2\le c_0$,
\[
  X(t)\le M+8CtM^3\le M+8Cc_0M.
\]
Choosing $c_0$ so that $8Cc_0\le\frac12$ improves the bound to
$X(t)\le\frac32M$.  By continuity, the bootstrap remains valid throughout
$[0,T]$; in particular,
\[
  \sup_{0\le t\le T}\norm{v(t)}_{\FL^1}\lesssim M.
\]

Next set
\[
  Y(t):=\sup_{0\le t'\le t}\norm{v(t')}_{\FL^\infty}.
\]
Using \eqref{eq:tri-FLinf} and the bound just proved, we obtain
\[
  Y(t)\le A+CM^2\int_0^t Y(t')\,dt'.
\]
Gronwall's inequality gives
$Y(t)\le A e^{CM^2t}\le A e^{Cc_0}\lesssim A$ uniformly on $[0,T]$.

It remains to estimate the increments from the initial datum. From the
integral equation,
\[
  v(t)-\phi
  =
  i\int_0^t \mathcal N_{m(t')}(v(t'),v(t'),v(t'))\,dt',
\]
where $m(t')$ denotes the bounded multiplier at time $t'$.  Therefore,
\eqref{eq:tri-FL1} and the $\FL^1$ bound imply
\[
  \norm{v(t)-\phi}_{\FL^1}
  \le
  \int_0^t
  \norm{\mathcal N_{m(t')}(v(t'),v(t'),v(t'))}_{\FL^1}\,dt'
  \lesssim
  \int_0^t M^3\,dt'
  \lesssim tM^3.
\]
Similarly, \eqref{eq:tri-FLinf}, with the $\FL^\infty$ norm placed on one
copy of $v(t')$, and the bounds already proved give
\[
\begin{aligned}
  \norm{v(t)-\phi}_{\FL^\infty}
  &\le
  \int_0^t
  \norm{\mathcal N_{m(t')}(v(t'),v(t'),v(t'))}_{\FL^\infty}\,dt'
  \\
  &\lesssim
  \int_0^t AM^2\,dt'
  \lesssim tAM^2.
\end{aligned}
\]
This proves the lemma.
\end{proof}

\subsection{Null-direction solutions}

The positive-regularity constructions exploit exact one-dimensional
solutions supported along a null direction of the hyperbolic symbol. Let
$z=x-y$.
If $u(t,x,y)=f(t,z)$, then
\[
  (\partial_x^2-\partial_y^2)f(t,x-y)
  =\partial_z^2 f(t,z)-\partial_z^2 f(t,z)=0.
\]
Thus \eqref{eq:HNLS} reduces to the pointwise ODE
\begin{equation}
  i\partial_t f+\abs{f}^2f=0.
\end{equation}
For any smooth initial profile $f_0$, the solution is
\begin{equation}
  f(t,z)=f_0(z)e^{it\abs{f_0(z)}^2}.
\end{equation}
Consequently,
\[
  u(t,x,y)=f_0(x-y)e^{it\abs{f_0(x-y)}^2}
\]
is an exact solution to \eqref{eq:HNLS}.

\begin{lemma}[Reduction from $\T^2$ to $\T$]\label{lem:T2-to-T}
Let $s\in\R$.  If $u(x,y)=f(x-y)$, then
\[
  \norm{u}_{H^s(\T^2)}\sim_s \norm{f}_{H^s(\T)}.
\]
\end{lemma}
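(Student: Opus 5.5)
The plan is to compute the Fourier coefficients of $u$ directly and compare weights. Write $f(z)=\sum_{k\in\Z}\widehat f(k)e^{ikz}$. Then
\[
  u(x,y)=f(x-y)=\sum_{k\in\Z}\widehat f(k)e^{i(kx-ky)},
\]
so the Fourier coefficients of $u$ on $\T^2$ are
\[
  \widehat u(n)=\widehat f(k)\ \text{ if } n=(k,-k),\qquad \widehat u(n)=0 \text{ otherwise}.
\]
Since the Fourier expansion on $\T^2$ is unique, it is enough to read off the coefficients from this absolutely (or $L^2$-, or distributionally) convergent series. For a general $f\in H^s(\T)$ with possibly negative $s$, one can argue with trigonometric polynomials first and then pass to the limit by density. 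Alternatively, one can simply compute $\widehat u(n)$ via the defining integral, changing variables $(x,y)\mapsto(x-y,y)$ on $\T^2$. This change of variables is measure preserving and $2\pi$-periodic, so the integral factors and yields the coefficient of $f$ times $\delta_{n_1+n_2=0}$.

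With the coefficients in hand, the norm identity becomes a weight comparison:
\[
  \norm{u}_{H^s(\T^2)}^2=\sum_{k\in\Z}\la (k,-k)\ra^{2s}\abs{\widehat f(k)}^2
  =\sum_{k\in\Z}(1+2k^2)^{s}\abs{\widehat f(k)}^2 .
\]
Since $1+k^2\le 1+2k^2\le 2(1+k^2)$, raising to the power $s$ gives
\[
  \min(1,2^s)\la k\ra^{2s}\le (1+2k^2)^s\le \max(1,2^s)\la k\ra^{2s}
\]
for every real $s$, whether positive or negative. Summing yields $\norm{u}_{H^s(\T^2)}^2\sim_s\norm{f}_{H^s(\T)}^2$, with constants $2^{\pm\abs{s}}$.

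The only point needing any care is justifying the coefficient identification when $f$ is merely a distribution, for $s<0$. In the paper's applications $f$ is smooth (Dirichlet kernels, the profiles $f_0e^{it\abs{f_0}^2}$), so I would state the argument for smooth $f$, where the series converges absolutely. I would then remark that the general case follows by density, because both sides are continuous in $f$ along this isometric-up-to-constants embedding.
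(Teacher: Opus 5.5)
Your proposal is correct and follows the paper's proof: you identify $\widehat u(k,-k)=\widehat f(k)$ with all other coefficients vanishing, then compare $\langle (k,-k)\rangle^{2s}=(1+2k^2)^s$ with $\langle k\rangle^{2s}$. The explicit constants and the remark on distributional $f$ when $s<0$ go slightly beyond what the paper states, but they change nothing essential.
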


\begin{proof}
Write $f(z)=\sum_{k\in\Z}\widehat f(k)e^{ikz}$.  Then
\[
  f(x-y)=\sum_{k\in\Z}\widehat f(k)e^{ikx}e^{-iky},
\]
so $\widehat u(k,\ell)=0$ unless $\ell=-k$, and
$\widehat u(k,-k)=\widehat f(k)$.  Therefore
\[
  \norm{u}_{H^s(\T^2)}^2
  =\sum_{k\in\Z}\la(k,-k)\ra^{2s}\abs{\widehat f(k)}^2
  \sim_s \sum_{k\in\Z}\la k\ra^{2s}\abs{\widehat f(k)}^2
  =\norm{f}_{H^s(\T)}^2,
\]
where the implicit constant depends only on $s$.
\end{proof}

\subsection{Sobolev difference estimates}

We record two ways to bound a fractional Sobolev norm from below. The first
uses one fixed spatial increment and will be applied in the subcritical case $s < \frac12$.
The second averages over all increments and is needed at the endpoint $s = \frac12$.

\begin{lemma}[Difference estimate]\label{lem:fixed-increment}
Let $0<s<1$.  There exists $c_s>0$ such that, for every $f\in H^s(\T)$ and
every $h\in\T$ with $0<\abs h\le\pi$,
\[
  \norm{f}_{\dot H^s(\T)}^2
  \ge c_s\abs h^{-2s}\norm{f(\cdot+h)-f(\cdot)}_{L^2(\T)}^2.
\]
\end{lemma}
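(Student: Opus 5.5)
We argue on the Fourier side using Parseval. Writing $f(z)=\sum_k\widehat f(k)e^{ikz}$, the translate has coefficients $\widehat{f(\cdot+h)}(k)=e^{ikh}\widehat f(k)$. Hence
\[
  \norm{f(\cdot+h)-f}_{L^2(\T)}^2=2\pi\sum_{k\ne0}\abs{e^{ikh}-1}^2\abs{\widehat f(k)}^2 .
\]
The $k=0$ term vanishes, and the normalization constant $2\pi$ is absorbed into $c_s$. It therefore suffices to prove the pointwise multiplier bound
\[
  \abs{e^{ikh}-1}^2\le C_s\abs{kh}^{2s}
  \qquad\text{for all } k\in\Z\setminus\{0\},\ 0<\abs h\le\pi .
\]
Multiplying by $\abs{h}^{-2s}$ and summing then gives $\abs h^{-2s}\norm{f(\cdot+h)-f}_{L^2}^2\le 2\pi C_s\norm{f}_{\dot H^s}^2$, which is the claim with $c_s=(2\pi C_s)^{-1}$.

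For the multiplier bound we use $\abs{e^{i\theta}-1}=2\abs{\sin(\theta/2)}\le\min(\abs\theta,2)$ and split into two cases with $\theta=kh$.
\begin{itemize}
\item If $\abs{\theta}\le 1$, then $\abs{e^{i\theta}-1}^2\le\theta^2\le\abs\theta^{2s}$, since $2s\le 2$ when $0<s<1$. (Here only $s\le1$ is needed.)
\item If $\abs\theta\ge1$, then $\abs{e^{i\theta}-1}^2\le 4\le4\abs\theta^{2s}$, since $s>0$.
\end{itemize}
So $C_s=4$ works, in fact uniformly in $s\in(0,1]$.

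There is no real obstacle here. The only points to check are that the Fourier normalization constants are absorbed into $c_s$, and that the restriction $\abs h\le\pi$ plays no role beyond making $h\ne0$ a well-defined element of $\T$. The bound is valid for any representative $h$, since $e^{ikh}$ depends only on $h$ modulo $2\pi$, and choosing the representative with $\abs h\le\pi$ makes $\abs h^{-2s}$ as large as possible, so it is the strongest version.
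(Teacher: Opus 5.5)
Your proposal is correct and follows the paper's proof: Plancherel reduces the claim to the multiplier bound $\abs{e^{ikh}-1}^2\le C_s\abs{kh}^{2s}$, which the paper obtains from $\abs{e^{ix}-1}\lesssim\min\{\abs x,1\}\le C_s\abs x^s$. Your two-case check of that bound is the same estimate written out explicitly, with the explicit constant $C_s=4$.
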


\begin{proof}
By Plancherel,
\[
  \norm{f(\cdot+h)-f(\cdot)}_{L^2(\T)}^2
  =2\pi\sum_{k\in\Z}\abs{e^{ikh}-1}^2\abs{\widehat f(k)}^2.
\]
Since $\abs{e^{ix}-1}\le C\min\{\abs x,1\}\le C_s\abs x^s$ for
$0<s<1$,
\[
  \norm{f(\cdot+h)-f(\cdot)}_{L^2(\T)}^2
  \le C_s\abs h^{2s}\sum_{k\in\Z}\abs{k}^{2s}\abs{\widehat f(k)}^2.
\]
Rearranging gives the claim.
\end{proof}

The characterization of homogeneous Sobolev norms by the
$L^2$-modulus of continuity is classical in the Euclidean setting; see
H\"ormander~\cite{Hormander}. Its periodic analogue in arbitrary dimension
is given by B\'enyi and Oh~\cite[Proposition~1.3]{BenyiOh}. We include the
short one-dimensional proof for completeness.

\begin{lemma}[Difference quotients]\label{lem:Hs-difference}
Let $0<s<1$.  For every smooth $h:\T\to\C$,
\[
  \norm{h}_{\dot H^s(\T)}^2
  \sim_s
  \int_{-\pi}^{\pi}
  \frac{\norm{h(\cdot+r)-h(\cdot)}_{L^2(\T)}^2}{\abs{r}^{1+2s}}\,dr
  \sim_s
  \iint_{\T\times\T}
  \frac{\abs{h(z)-h(w)}^2}{d_\T(z,w)^{1+2s}}\,dz\,dw,
\]
where
\[
d_\T(z,w):=\min_{\ell\in\Z}\abs{z-w+2\pi\ell}
\]
is the geodesic distance on $\T$.
\end{lemma}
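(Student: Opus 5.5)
The plan is to prove the two equivalences separately, by Plancherel in the $r$-variable for the first and by a change of variables for the second.

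\emph{First equivalence.} By Plancherel, as in the proof of Lemma~\ref{lem:fixed-increment},
\[
  \norm{h(\cdot+r)-h(\cdot)}_{L^2(\T)}^2=2\pi\sum_{k\in\Z}\abs{e^{ikr}-1}^2\abs{\widehat h(k)}^2 .
\]
Since every term is nonnegative, Tonelli lets us exchange the sum and the $r$-integral. The middle quantity therefore equals
\[
  2\pi\sum_{k\ne0}I(k)\abs{\widehat h(k)}^2,
  \qquad
  I(k):=\int_{-\pi}^{\pi}\frac{\abs{e^{ikr}-1}^2}{\abs r^{1+2s}}\,dr .
\]
The $k=0$ term vanishes. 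It then suffices to show $I(k)\sim_s\abs k^{2s}$ uniformly for $k\ne0$. Substituting $\rho=\abs k r$ gives
\[
  I(k)=\abs k^{2s}\int_{-\pi\abs k}^{\pi\abs k}\frac{\abs{e^{i\rho}-1}^2}{\abs\rho^{1+2s}}\,d\rho ,
\]
using that $\abs{e^{i\rho}-1}$ is even in $\rho$.

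\emph{Bounds on the $\rho$-integral.} The integral is bounded above by the full-line integral over $\R$. That integral is finite for $0<s<1$: near $0$ we have $\abs{e^{i\rho}-1}^2\le\rho^2$ and $1+2s<3$, and at infinity the integrand is $\le4\abs\rho^{-1-2s}$, which is integrable because $s>0$. Since $\abs k\ge1$, the integral is bounded below by the integral over $[-\pi,\pi]$, which is a positive constant depending only on $s$. This gives the first $\sim_s$.

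\emph{Second equivalence.} In the double integral, substitute $w=z+r$ with $r\in[-\pi,\pi)$ taken as the representative of $w-z$ modulo $2\pi$. Then $d_\T(z,w)=\abs r$, and by Fubini the double integral equals exactly
\[
  \int_{-\pi}^{\pi}\abs r^{-1-2s}\int_\T\abs{h(z+r)-h(z)}^2\,dz\,dr ,
\]
which is the middle expression. This step is actually an identity, not merely a comparison.

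The only delicate point is the uniform two-sided bound on $I(k)$, and in particular the lower bound for the truncated integral. That is handled by the observation $\abs k\ge1$, so the proof should be short and routine.
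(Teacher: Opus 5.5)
Your proof is correct and follows the paper's argument essentially verbatim: you use Plancherel and the rescaling $v=kr$ to get $I(k)\sim_s|k|^{2s}$, with the upper bound from the convergent full-line integral and the lower bound from a fixed subinterval (the paper takes $[1,2]$), and you obtain the double-integral form by writing $w=z+r$ with the geodesic representative. Your remark that the second comparison is in fact an identity is accurate.
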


\begin{proof}
Set
\[
  A_s(h):=\int_{-\pi}^{\pi}
  \frac{\norm{h(\cdot+r)-h(\cdot)}_{L^2(\T)}^2}{\abs{r}^{1+2s}}\,dr.
\]
Writing $h(z)=\sum_{k\in\Z}\widehat h(k)e^{ikz}$ and using Plancherel,
\[
  A_s(h)=2\pi
  \sum_{k\in\Z}\abs{\widehat h(k)}^2
  \int_{-\pi}^{\pi}\frac{\abs{e^{ikr}-1}^2}{\abs{r}^{1+2s}}\,dr.
\]
For $k=0$ the inner integral vanishes.  For $k\ne0$, the change of variables
$v=kr$ gives
\[
  \int_{-\pi}^{\pi}\frac{\abs{e^{ikr}-1}^2}{\abs{r}^{1+2s}}\,dr
  =\abs{k}^{2s}\int_{-\pi\abs k}^{\pi\abs k}
  \frac{\abs{e^{iv}-1}^2}{\abs{v}^{1+2s}}\,dv
  \sim_s \abs{k}^{2s},
\]
uniformly in $k\ne0$.  Indeed, the upper bound follows from
$\abs{e^{iv}-1}\lesssim\min\{\abs v,1\}$, while the lower bound follows by
integrating over the fixed interval $[1,2]\subset[0,\pi]$.  Hence
$A_s(h)\sim_s\norm{h}_{\dot H^s(\T)}^2$.

The double-integral form follows by writing $r=w-z$, with the geodesic
representative $r\in[-\pi,\pi]$.
\end{proof}

We also use the elementary fact that, for every $0<a\le b<2\pi$, there is
$c_{a,b}>0$ such that
\begin{equation}\label{eq:phase-away}
  \abs{e^{i\theta}-1}\ge c_{a,b}
  \qquad\text{whenever}\qquad
  a\le\abs\theta\le b.
\end{equation}
Indeed, this follows from
$\abs{e^{i\theta}-1}=2\abs{\sin(\theta/2)}$ and the compactness of
$[a,b]\subset(0,2\pi)$.

\section{The negative regularity case \texorpdfstring{$s<0$}{s<0}}
\label{sec:negative}

We first prove Theorem~\ref{thm:main} in the negative regularity range
$s<0$.  Unlike the positive-regularity argument below in Sections~\ref{sec:subcritical} and~\ref{sec:critical}, this part does not use
an exact null-direction solution.  Instead, following the ODE-approximation
strategy in~\cite[Section~3.3]{OhWang2015} by Oh and the second author, we choose data supported on two
high-frequency blocks.  The cubic interaction (see \eqref{QN} for the definition of $Q_N$),
\[
  Q_N-Q_{2N}+Q_N
\]
produces a large low-frequency component in the pointwise ODE flow.  On the
short time scale used below, the HNLS interaction representation remains close
to this ODE flow, since the oscillatory factor in the first Picard iterate
satisfies $e^{it\Phi}=1+o(1)$.

Throughout this section, fix $s<0$.  Let $N\gg1$ be dyadic and let
$1\ll R\ll N$ be another dyadic parameter to be chosen later.  Write
$e_1=(1,0)$ and define
\begin{align}\label{QN}
\begin{aligned}
  Q_N&:=\{n\in\Z^2:\abs{n-Ne_1}_\infty\le R/10\},\\
  Q_{2N}&:=\{n\in\Z^2:\abs{n-2Ne_1}_\infty\le R/10\}.
\end{aligned}
\end{align}
Let $\alpha_N>0$ and define the trigonometric polynomial $\phi_N$ by
\begin{equation}\label{eq:neg-data}
  \widehat{\phi_N}(n)
  :=\alpha_N\big(\mathbf{1}_{Q_N}(n)+\mathbf{1}_{Q_{2N}}(n)\big).
\end{equation}
\begin{lemma}[Size of the high-frequency data]\label{lem:neg-data-size}
For $\phi_N$ defined by \eqref{eq:neg-data},
\[
  \norm{\phi_N}_{H^s(\T^2)}\sim_s \alpha_N R N^s,
  \qquad
  \norm{\phi_N}_{\FL^1(\T^2)}\sim \alpha_N R^2,
  \qquad
  \norm{\phi_N}_{\FL^\infty(\T^2)}=\alpha_N.
\]
\end{lemma}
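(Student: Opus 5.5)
This is a direct computation from the definition \eqref{eq:neg-data}. The plan is to count lattice points in the two cubes and use the size of $\la n\ra$ on each cube.

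\emph{Cardinality.} Each of $Q_N$ and $Q_{2N}$ is a cube of side length $R/5$ in $\Z^2$ (sup-norm ball of radius $R/10$). Since $R\gg1$ is dyadic, each contains $(2\lfloor R/10\rfloor+1)^2\sim R^2$ lattice points.

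\emph{Disjointness and localization.} Because $R\ll N$, the two cubes are disjoint. Every $n\in Q_N$ satisfies $\abs n\in[N-R/5,\,N+R/5]$, so $\abs n\sim N$. Likewise every $n\in Q_{2N}$ has $\abs n\sim 2N$. Hence $\la n\ra\sim N$ uniformly on $Q_N\cup Q_{2N}$, with absolute implicit constants.

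\emph{The three norms.}
\begin{itemize}
\item The $\FL^\infty$ identity is immediate: $\widehat{\phi_N}$ takes only the values $0$ and $\alpha_N$, and the cubes are disjoint and nonempty.
\item For the $\FL^1$ norm, sum $\alpha_N$ over $\abs{Q_N}+\abs{Q_{2N}}\sim R^2$ points, giving $\alpha_N R^2$.
\item For the Sobolev norm,
\[
\norm{\phi_N}_{H^s}^2=\alpha_N^2\sum_{n\in Q_N\cup Q_{2N}}\la n\ra^{2s}.
\]
By the localization step, $\la n\ra^{2s}\sim_s N^{2s}$ on this set; the constant depends on $s$ because $s<0$ and $\la n\ra$ lies between $N/2$ and $3N$. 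Combined with the cardinality $\sim R^2$, this gives $\norm{\phi_N}_{H^s}^2\sim_s\alpha_N^2R^2N^{2s}$. Taking square roots yields $\alpha_N R N^s$.
\end{itemize}

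No step is a genuine obstacle. The only care needed is to keep the implicit constants uniform in $N$ and $R$, which holds because $R\le N$ keeps both cubes inside an annulus $\{\abs n\sim N\}$ of fixed ratio.
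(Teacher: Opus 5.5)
Your proof is correct and follows the same route as the paper. You count the $\sim R^2$ lattice points in each of the two boxes, use $\la n\ra\sim N$ there to get $\norm{\phi_N}_{H^s}^2\sim_s\alpha_N^2R^2N^{2s}$, and read the Fourier--Lebesgue norms directly from \eqref{eq:neg-data}. You are slightly more careful than the paper in noting that the exact identity $\norm{\phi_N}_{\FL^\infty}=\alpha_N$ relies on the disjointness of $Q_N$ and $Q_{2N}$, which follows from $R\ll N$.
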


\begin{proof}
The support of $\widehat{\phi_N}$ is contained in two boxes of cardinality
comparable to $R^2$, and all these frequencies have size comparable to $N$.
Therefore,
  $\norm{\phi_N}_{H^s(\T^2)}^2
  \sim_s \alpha_N^2N^{2s}R^2$,
which gives the Sobolev estimate.  The two Fourier-Lebesgue estimates follow
directly from \eqref{eq:neg-data}.
\end{proof}

Let
\[
  Q_0:=\{n\in\Z^2:\abs n_\infty\le R/100\},
  \qquad
  \mathcal L_s(R):=
  \bigg(\sum_{n\in Q_0}\la n\ra^{2s}\bigg)^{1/2}.
\]
A direct computation gives
\begin{equation}\label{eq:neg-Ls}
  \mathcal L_s(R)\sim_s
  \begin{cases}
    R^{1+s},& -1<s<0,\\
    (\log R)^{1/2},& s=-1,\\
    1,& s<-1.
  \end{cases}
\end{equation}

\begin{lemma}
\label{lem:neg-cubic-count}
For every $n\in Q_0$, we have
\[
  \big(\Fx(\abs{\phi_N}^2\phi_N)\big)(n)
  =
  \sum_{n=n_1-n_2+n_3}
  \widehat{\phi_N}(n_1)\overline{\widehat{\phi_N}(n_2)}
  \widehat{\phi_N}(n_3)
  \gtrsim \alpha_N^3R^4.
\]
\end{lemma}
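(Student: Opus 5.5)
Since all Fourier coefficients of $\phi_N$ are nonnegative (each equals $\alpha_N$ or $0$), every term in the convolution sum is nonnegative. The plan is therefore to drop all contributions except one explicit family of triples and count that family from below. We keep only the configurations with $n_1\in Q_N$, $n_2\in Q_{2N}$, $n_3\in Q_N$, which is the interaction $Q_N-Q_{2N}+Q_N$ singled out in the section introduction. For $n\in Q_0$ the sum is then bounded below by
\[
  \alpha_N^3\,\#\{(n_1,n_3)\in Q_N\times Q_N:\ n_2:=n_1+n_3-n\in Q_{2N}\}.
\]
This uses the identity $\widehat{\phi_N}(n_1)\overline{\widehat{\phi_N}(n_2)}\widehat{\phi_N}(n_3)=\alpha_N^3$ for such triples.

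The counting step reduces to showing that this set has cardinality $\gtrsim R^4$. Write $n_1=Ne_1+a$ and $n_3=Ne_1+b$ with $a,b\in\Z^2$ and $\abs a_\infty,\abs b_\infty\le R/10$. Then
\[
  n_2-2Ne_1=a+b-n.
\]
We need $\abs{a+b-n}_\infty\le R/10$. Since $\abs n_\infty\le R/100$, it suffices to restrict to $\abs a_\infty,\abs b_\infty\le R/25$. Indeed, then $\abs{a+b-n}_\infty\le 2R/25+R/100<R/10$. Each of $a$ and $b$ ranges over a lattice box with $\gtrsim R^2$ points, since $R\gg1$. This gives $\gtrsim R^4$ admissible pairs $(a,b)$, and each pair determines $n_2$ uniquely. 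Multiplying by $\alpha_N^3$ yields the claim.

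One check remains: that these triples are genuinely counted in the full sum. The sum runs over all $(n_1,n_2,n_3)$ with $n=n_1-n_2+n_3$, and our subfamily satisfies this constraint by construction. Since $\widehat{\phi_N}$ is real and nonnegative, no cancellation can occur.

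There is no real obstacle here. The only points needing care are the positivity observation, which lets us discard all other interactions (for instance $Q_{2N}-Q_N+Q_N$, which lands near $2N$ and not in $Q_0$ anyway), and the choice of sub-box radius so that the constraint $n_2\in Q_{2N}$ holds automatically.
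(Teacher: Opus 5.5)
Your proof is correct and is essentially the paper's argument: positivity lets you keep only the $Q_N - Q_{2N} + Q_N$ triples, and you shrink $n_1, n_3$ to sub-boxes so that $n_2 = n_1 + n_3 - n$ falls automatically into $Q_{2N}$. The only difference is that the paper uses sub-box radius $R/1000$ where you use $R/25$, and your check $2R/25 + R/100 = 9R/100 < R/10$ confirms that this choice also works.
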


\begin{proof}
By \eqref{eq:neg-data},
\[
\begin{aligned}
&\sum_{n=n_1-n_2+n_3}
  \widehat{\phi_N}(n_1)\overline{\widehat{\phi_N}(n_2)}
  \widehat{\phi_N}(n_3) \\
&\qquad =
  \alpha_N^3
  \#\big\{(n_1,n_2,n_3)\in(Q_N\cup Q_{2N})^3:
  n=n_1-n_2+n_3\big\}.
\end{aligned}
\]
We count only triples with
\[
  n_1\in Q_N,\qquad n_2\in Q_{2N},\qquad n_3\in Q_N.
\]
For fixed $n\in Q_0$, choose $n_1,n_3$ in the smaller boxes
\[
  \abs{n_1-Ne_1}_\infty\le R/1000,
  \qquad
  \abs{n_3-Ne_1}_\infty\le R/1000.
\]
There are $\gtrsim R^4$ such choices.  For each pair, define
$n_2=n_1+n_3-n$.  Then
\[
  \abs{n_2-2Ne_1}_\infty
  \le
  \abs{n_1-Ne_1}_\infty
  +\abs{n_3-Ne_1}_\infty
  +\abs n_\infty
  <R/10,
\]
so $n_2\in Q_{2N}$.  Each of these triples contributes $\alpha_N^3$, and all
contributions have the same sign because the Fourier coefficients in
\eqref{eq:neg-data} are positive.
\end{proof}

Let $w_N$ be the solution to the pointwise ODE
\begin{align}\label{eq:ODE0}
  i\partial_t w+\abs{w}^2w=0,
  \qquad
  w(0)=\phi_N.
\end{align}
Thus
\begin{align}\label{eq:ODE}
  w_N(t)=\phi_Ne^{it\abs{\phi_N}^2}.
\end{align}
For a finite set $E\subset\Z^2$, let $P_E$ denote the Fourier projection onto
$E$.

\begin{lemma}[ODE low-frequency lower bound]\label{lem:neg-ODE-lower}
Let $\beta_N\in(0,1)$ with $\beta_N\to0$ as $N\to \infty$, and set
\[
  T_N:=\beta_N \norm{\phi_N}_{\FL^1(\T^2)}^{-2}.
\]
Then, for all sufficiently large $N$,
\[
  \norm{P_{Q_0}w_N(T_N)}_{H^s(\T^2)}
  \gtrsim_s \beta_N\alpha_N\mathcal L_s(R).
\]
\end{lemma}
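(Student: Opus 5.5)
We expand the exact ODE solution $w_N(T_N)=\phi_Ne^{iT_N\abs{\phi_N}^2}$ to first order in $T_N\abs{\phi_N}^2$. The first-order term is $iT_N\abs{\phi_N}^2\phi_N$; by Lemma~\ref{lem:neg-cubic-count} it is large and of a fixed sign on $Q_0$. The zeroth- and higher-order terms must then be shown to be negligible there. Writing
\[
  w_N(T_N)=\phi_N+iT_N\abs{\phi_N}^2\phi_N+\mathcal R_N,
  \qquad
  \mathcal R_N:=\phi_N\sum_{k\ge2}\frac{(iT_N\abs{\phi_N}^2)^k}{k!},
\]
we first note that $P_{Q_0}\phi_N=0$. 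This holds because $\widehat{\phi_N}$ is supported in $Q_N\cup Q_{2N}$, which is disjoint from $Q_0$ once $R\ll N$. Hence
\[
  \widehat{w_N(T_N)}(n)=iT_N\big(\Fx(\abs{\phi_N}^2\phi_N)\big)(n)+\widehat{\mathcal R_N}(n),
  \qquad n\in Q_0 .
\]

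Next we bound each term on $Q_0$ using the data sizes of Lemma~\ref{lem:neg-data-size}, namely $\norm{\phi_N}_{\FL^1}\sim\alpha_NR^2$ and $\norm{\phi_N}_{\FL^\infty}=\alpha_N$, which give $T_N=\beta_N\norm{\phi_N}_{\FL^1}^{-2}\sim\beta_N\alpha_N^{-2}R^{-4}$. For the main term, Lemma~\ref{lem:neg-cubic-count} gives, for every $n\in Q_0$,
\[
  \abs{iT_N(\Fx(\abs{\phi_N}^2\phi_N))(n)}\gtrsim T_N\alpha_N^3R^4\sim\beta_N\alpha_N .
\]
The implicit constant here is absolute. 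For the remainder we apply \eqref{eq:FLinf-alg} and \eqref{eq:FL1-alg} termwise. The $k$-th term is a product of $2k+1$ factors of $\phi_N$ or $\overline{\phi_N}$, and complex conjugation preserves both Fourier--Lebesgue norms. Placing $\FL^\infty$ on one factor gives
\[
  \norm{\mathcal R_N}_{\FL^\infty}
  \le\sum_{k\ge2}\frac{T_N^k}{k!}\,\alpha_N\norm{\phi_N}_{\FL^1}^{2k}
  =\alpha_N\sum_{k\ge2}\frac{\beta_N^k}{k!}
  \lesssim\alpha_N\beta_N^2 .
\]
Since $\beta_N\to0$, for large $N$ we get $\abs{\widehat{w_N(T_N)}(n)}\gtrsim\beta_N\alpha_N$ uniformly in $n\in Q_0$. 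Summing over $Q_0$ with weight $\la n\ra^{2s}$ then yields $\norm{P_{Q_0}w_N(T_N)}_{H^s}\gtrsim\beta_N\alpha_N\mathcal L_s(R)$.

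The main obstacle is making sure that the remainder is controlled pointwise in $\FL^\infty$ rather than only in $\FL^1$. An $\FL^1$ bound would give $\alpha_NR^2\beta_N^2$, which is too large when compared pointwise with $\beta_N\alpha_N$. We avoid this by placing the $\FL^\infty$ norm on a single factor, so that the remainder's size at each frequency carries the gain $\beta_N^2$ against the main term's $\beta_N$. A secondary point to check is the exact form of the sum in Lemma~\ref{lem:neg-cubic-count}. It counts $\widehat{\phi_N}(n_1)\overline{\widehat{\phi_N}(n_2)}\widehat{\phi_N}(n_3)$, which is precisely the Fourier coefficient of $\phi_N\overline{\phi_N}\phi_N=\abs{\phi_N}^2\phi_N$ with no symmetry factor lost, so the first-order coefficient really has the stated fixed-sign lower bound.
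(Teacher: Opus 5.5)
Your proposal is correct and follows the paper's proof essentially step for step. Both proofs Taylor-expand $\phi_Ne^{iT_N\abs{\phi_N}^2}$, use $P_{Q_0}\phi_N=0$, bound the cubic term below by $\beta_N\alpha_N$ via Lemma~\ref{lem:neg-cubic-count}, control $\mathcal R_N$ in $\FL^\infty$ by $\alpha_N\beta_N^2$ by placing the $\FL^\infty$ norm on a single factor, and then sum $\la n\ra^{2s}$ over $Q_0$.
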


\begin{proof}
Expanding the exact ODE solution \eqref{eq:ODE} by Taylor formula gives
\[
  w_N(T_N)=\phi_N+iT_N\abs{\phi_N}^2\phi_N+\mathcal R_N,
\]
where
\[
  \mathcal R_N=\phi_N\sum_{k\ge2}\frac{(iT_N\abs{\phi_N}^2)^k}{k!}.
\]
Since $\widehat{\phi_N}$ is supported in $Q_N\cup Q_{2N}$, the initial term $\phi_N$
has no Fourier support in $Q_0$.  By Lemmas~\ref{lem:neg-data-size} and \ref{lem:neg-cubic-count},
\[
  \abs{\big(\Fx(T_N\abs{\phi_N}^2\phi_N)\big)(n)}
  \gtrsim
  T_N\alpha_N^3R^4
  =
  \beta_N \norm{\phi_N}_{\FL^1}^{-2}\alpha_N^3R^4
  \sim
  \beta_N\alpha_N
\]
for every $n\in Q_0$.  On the other hand,
\eqref{eq:FL1-alg} and \eqref{eq:FLinf-alg} imply
\[
\begin{split}
\norm{\mathcal R_N}_{\FL^\infty}
  & \le
  \norm{\phi_N}_{\FL^\infty} \sum_{k\ge2}
  \frac{\big(T_N\norm{\phi_N}_{\FL^1}^2\big)^k}{k!} \\
  &  =
  \alpha_N\sum_{k\ge2}
  \frac{\big(T_N\norm{\phi_N}_{\FL^1}^2\big)^k}{k!}
  =
  \alpha_N\sum_{k\ge2}
  \frac{\beta_N^k}{k!}
  \lesssim \alpha_N\beta_N^2.
\end{split}
\]
For large $N$, this is at most half of the cubic contribution
$\beta_N\alpha_N$ on $Q_0$.
Therefore
\[
  \abs{\widehat{w_N(T_N)}(n)}\gtrsim \beta_N\alpha_N,
  \qquad n\in Q_0,
\]
and hence
\[
  \norm{P_{Q_0}w_N(T_N)}_{H^s(\T^2)}
  =
  \bigg(\sum_{n\in Q_0}\la n\ra^{2s}
  \abs{\widehat{w_N(T_N)}(n)}^2\bigg)^{1/2}
  \gtrsim_s \beta_N\alpha_N\mathcal L_s(R).
\]
This proves the lemma.
\end{proof}

We next compare the HNLS solution with the ODE flow.  Let $u_N$ denote the HNLS
solution with $u_N(0)=\phi_N$, and define the interaction representation
\[
  \mathbf u_N(t):=e^{-it(\partial_x^2-\partial_y^2)}u_N(t).
\]
In Fourier variables,
\begin{equation}\label{eq:neg-int-rep}
  \widehat{\mathbf u_N}(n,t)
  =
  \widehat{\phi_N}(n)
  +i\sum_{n=n_1-n_2+n_3}
  \int_0^t
  e^{it'\Phi(\bar n)}
  \widehat{\mathbf u_N}(n_1,t')
  \overline{\widehat{\mathbf u_N}(n_2,t')}
  \widehat{\mathbf u_N}(n_3,t')\,dt',
\end{equation}
where
\[
  q(k):=k_1^2-k_2^2,
  \qquad
  \Phi(\bar n):=q(n)-q(n_1)+q(n_2)-q(n_3),
  \qquad
  n=n_1-n_2+n_3.
\]
Note that the ODE solution to \eqref{eq:ODE0} satisfies the same integral equation with
$e^{it'\Phi(\bar n)}$ replaced by $1$,
\begin{align}\label{eq:neg-int-rep-2}
  \widehat{w_N}(n,t)
  =
  \widehat{\phi_N}(n)
  +i\sum_{n=n_1-n_2+n_3}
  \int_0^t
  \widehat{w_N}(n_1,t')
  \overline{\widehat{w_N}(n_2,t')}
  \widehat{w_N}(n_3,t')\,dt',
\end{align}
The next lemma shows that, when $t$ is small compared with the oscillation
scale of $\Phi$, i.e. $|t\Phi| \ll 1$, the ODE \eqref{eq:neg-int-rep-2} is a good approximation to
the HNLS interaction representation \eqref{eq:neg-int-rep}.

\begin{lemma}[ODE approximation]\label{lem:neg-ODE-approx}
Assume
\[
  T_N\norm{\phi_N}_{\FL^1(\T^2)}^2=\beta_N \to0,
  \qquad
  T_NN^2\to0.
\]
Then
\[
  \norm{\mathbf u_N-w_N}_{L^\infty([0,T_N];\FL^\infty)}
  =o(\beta_N\alpha_N),
\]
and consequently
\[
  \norm{P_{Q_0}(\mathbf u_N(T_N)-w_N(T_N))}_{H^s(\T^2)}
  =o(\beta_N\alpha_N) \mathcal L_s(R).
\]
\end{lemma}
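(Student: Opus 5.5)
The plan is to write the difference $\mathbf u_N-w_N$ via the two integral equations \eqref{eq:neg-int-rep} and \eqref{eq:neg-int-rep-2}, and estimate it in $\FL^\infty$ using the trilinear bounds and the Wiener bounds of Lemma~\ref{lem:wiener-bounds}. Both $\mathbf u_N$ and $w_N$ satisfy the hypotheses of Lemma~\ref{lem:wiener-bounds}, with $m=e^{it'\Phi}$ and $m=1$ respectively, and with $M=\norm{\phi_N}_{\FL^1}$, $A=\alpha_N$. Since $T_NM^2=\beta_N\to0$, we have $T_NM^2\le c_0$ for $N$ large. Hence on $[0,T_N]$ both functions are bounded by $\lesssim M$ in $\FL^1$ and by $\lesssim A$ in $\FL^\infty$.

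Subtracting the equations, I split
\[
\widehat{\mathbf u_N}-\widehat{w_N}=i\int_0^t\sum\big(e^{it'\Phi}-1\big)\widehat{\mathbf u_N}\overline{\widehat{\mathbf u_N}}\widehat{\mathbf u_N}\,dt' + i\int_0^t\sum\big[\widehat{\mathbf u_N}\overline{\widehat{\mathbf u_N}}\widehat{\mathbf u_N}-\widehat{w_N}\overline{\widehat{w_N}}\widehat{w_N}\big]dt'.
\]

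For the first term, I need a bound on $\Phi$ over the relevant frequencies. The Fourier support of $\mathbf u_N(t')$ is not compact, so I cannot bound $\Phi$ there directly. Instead I would Taylor-expand the phase: $|e^{it'\Phi}-1|\le t'|\Phi|$. The issue is to control $\Phi$ on the interactions that matter. Here I use that $\mathbf u_N$ is close to $\phi_N$: by Lemma~\ref{lem:wiener-bounds}, $\norm{\mathbf u_N-\phi_N}_{\FL^1}\lesssim tM^3$. I replace each factor by $\phi_N$ plus an error $e$. The terms containing an error are bounded, without the phase gain, by $T_N\cdot A\,M\cdot T_NM^3\sim \alpha_N\beta_N^2$, placing $\FL^\infty$ on a $\phi_N$ factor where possible; this is $o(\beta_N\alpha_N)$. (If the error carries the $\FL^\infty$ norm, I use $\norm{\mathbf u_N-\phi_N}_{\FL^\infty}\lesssim tAM^2$, which gives the same bound.) The main term, with all three factors equal to $\phi_N$, has $n_j\in Q_N\cup Q_{2N}$, so $|n_j|\lesssim N$ and, with $n=n_1-n_2+n_3$, $|\Phi|\lesssim N^2$. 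This term is therefore bounded by $\int_0^{T_N}t'N^2\,dt'\cdot AM^2\lesssim T_NN^2\cdot T_NM^2A=o(1)\beta_N\alpha_N$, using $T_NN^2\to0$.

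For the second term, I expand the cubic difference telescopically. Each piece contains one factor $\mathbf u_N-w_N$ and two factors bounded in $\FL^1$ by $M$, with the $\FL^\infty$ norm placed on the difference via \eqref{eq:tri-FLinf}. Setting $D(t)=\sup_{t'\le t}\norm{\mathbf u_N-w_N}_{\FL^\infty}$, this gives
\[
D(t)\le o(\beta_N\alpha_N)+CM^2\int_0^tD.
\]
Gronwall then yields $D(T_N)\le o(\beta_N\alpha_N)e^{C\beta_N}=o(\beta_N\alpha_N)$.

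The Sobolev consequence follows immediately:
\[
\norm{P_{Q_0}(\mathbf u_N-w_N)}_{H^s}\le \norm{\mathbf u_N-w_N}_{\FL^\infty}\bigg(\sum_{n\in Q_0}\la n\ra^{2s}\bigg)^{1/2}=o(\beta_N\alpha_N)\mathcal L_s(R).
\]
I expect the main obstacle to be the phase term, specifically justifying $|\Phi|\lesssim N^2$ only after reducing to $\phi_N$ factors. This reduction is exactly where the $\FL^1$ closeness to the initial data, and the placement of the $\FL^\infty$ norm, must be handled carefully.
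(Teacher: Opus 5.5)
Your proof is correct and follows the paper's mechanism. The main part of your phase-defect term, $\int_0^t\mathcal N_{e^{it'\Phi}-1}(\phi_N,\phi_N,\phi_N)\,dt'$, is exactly the paper's $A_{\mathrm{HNLS}}-A_{\mathrm{ODE}}$, controlled by $\abs{\Phi}\lesssim N^2$, and every other contribution is $O(\beta_N^2\alpha_N)$ by Lemma~\ref{lem:wiener-bounds}. The only difference is bookkeeping: the paper compares each solution with its own frozen Picard iterate instead of running Gronwall on the Lipschitz term, and you could likewise drop Gronwall by inserting $\norm{\mathbf u_N-w_N}_{\FL^\infty}\le\norm{\mathbf u_N-\phi_N}_{\FL^\infty}+\norm{w_N-\phi_N}_{\FL^\infty}\lesssim\beta_N\alpha_N$ directly, which gives $O(\beta_N^2\alpha_N)$.
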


\begin{proof}
The contraction argument constructs both the HNLS interaction representation
\eqref{eq:neg-int-rep} and the ODE flow \eqref{eq:ODE} on $[0,T_N]$ in the
space $\FL^1$, since $T_N\norm{\phi_N}_{\FL^1}^2=\beta_N$ is small.
Since the datum is a trigonometric polynomial, the same estimates in weighted
Wiener norms give persistence of smoothness on this interval.  Applying
Lemma~\ref{lem:wiener-bounds} to \eqref{eq:neg-int-rep} and to the ODE
Duhamel formula \eqref{eq:neg-int-rep-2} yields, uniformly for $0\le t\le T_N$,
\begin{align}
  \norm{\mathbf u_N(t)}_{\FL^1}
  +\norm{w_N(t)}_{\FL^1}
  &\lesssim \norm{\phi_N}_{\FL^1},\label{eq:neg-W1}\\
  \norm{\mathbf u_N(t)}_{\FL^\infty}
  +\norm{w_N(t)}_{\FL^\infty}
  &\lesssim \alpha_N,\\
  \norm{\mathbf u_N(t)-\phi_N}_{\FL^1}
  +\norm{w_N(t)-\phi_N}_{\FL^1}
  &\lesssim \beta_N\norm{\phi_N}_{\FL^1},\\
  \norm{\mathbf u_N(t)-\phi_N}_{\FL^\infty}
  +\norm{w_N(t)-\phi_N}_{\FL^\infty}
  &\lesssim \beta_N\alpha_N.\label{eq:neg-closeinf}
\end{align}

Let $A_{\mathrm{HNLS}}(t)$ and $A_{\mathrm{ODE}}(t)$ be the first Picard
corrections obtained by freezing the cubic term at $\phi_N$:
\[
  \widehat{A_{\mathrm{HNLS}}}(n,t)
  =
  i\sum_{n=n_1-n_2+n_3}
  \widehat{\phi_N}(n_1)\overline{\widehat{\phi_N}(n_2)}
  \widehat{\phi_N}(n_3)
  \int_0^t e^{it'\Phi(\bar n)}\,dt',
\]
and
\[
  \widehat{A_{\mathrm{ODE}}}(n,t)
  =
  it\sum_{n=n_1-n_2+n_3}
  \widehat{\phi_N}(n_1)\overline{\widehat{\phi_N}(n_2)}
  \widehat{\phi_N}(n_3).
\]
Using \eqref{eq:neg-W1}--\eqref{eq:neg-closeinf}, we first prove that
\begin{equation}\label{eq:neg-higher}
  \norm{\mathbf u_N(t)-\phi_N-A_{\mathrm{HNLS}}(t)}_{\FL^\infty}
  +
  \norm{w_N(t)-\phi_N-A_{\mathrm{ODE}}(t)}_{\FL^\infty}
  \lesssim \beta_N^2\alpha_N.
\end{equation}
We prove the first estimate in \eqref{eq:neg-higher}; the ODE estimate is the
same argument with the multiplier equal to $1$.  For each fixed $t'$, define
the trilinear multiplier
\[
  \mathcal F_x \big({\mathcal N_{t'}(f_1,f_2,f_3)} \big)(n)
  :=
  \sum_{n=n_1-n_2+n_3}
  e^{it'\Phi(\bar n)}
  \widehat f_1(n_1)\overline{\widehat f_2(n_2)}\widehat f_3(n_3).
\]
Since $\abs{e^{it'\Phi(\bar n)}}=1$, the bounds
\eqref{eq:tri-FL1}--\eqref{eq:tri-FLinf} apply to $\mathcal N_{t'}$.
Subtracting the frozen first Picard correction from the Duhamel formula gives
\[
\begin{aligned}
  \mathbf u_N(t)-\phi_N-A_{\mathrm{HNLS}}(t)
  &=
  i\int_0^t
  \Big[
  \mathcal N_{t'}\big(
    \mathbf u_N(t'),\mathbf u_N(t'),\mathbf u_N(t')\big)
  -\mathcal N_{t'}(\phi_N,\phi_N,\phi_N)
  \Big]\,dt'.
\end{aligned}
\]
By trilinearity,
\[
\begin{aligned}
&  \mathbf u_N(t')\overline{\mathbf u_N(t')}\mathbf u_N(t')
  -\phi_N\overline{\phi_N}\phi_N \\
&\quad =
  (\mathbf u_N(t')-\phi_N)\overline{\mathbf u_N(t')}\mathbf u_N(t')
  +\phi_N\overline{(\mathbf u_N(t')-\phi_N)}\,\mathbf u_N(t')
  +\phi_N\overline{\phi_N}(\mathbf u_N(t')-\phi_N).
\end{aligned}
\]
Using \eqref{eq:tri-FLinf}, together with \eqref{eq:neg-W1}--\eqref{eq:neg-closeinf} with $t' < T_N$
and $\norm{\phi_N}_{\FL^\infty}=\alpha_N$ from Lemma \ref{lem:neg-data-size},
we estimate the three terms respectively by
\[
\begin{aligned}
  \norm{\mathbf u_N(t')-\phi_N}_{\FL^\infty}
    \norm{\mathbf u_N(t')}_{\FL^1}^2
  &\lesssim \beta_N\alpha_N\norm{\phi_N}_{\FL^1}^2,\\
  \norm{\phi_N}_{\FL^\infty}
    \norm{\mathbf u_N(t')-\phi_N}_{\FL^1}
    \norm{\mathbf u_N(t')}_{\FL^1}
  &\lesssim \beta_N\alpha_N\norm{\phi_N}_{\FL^1}^2,\\
  \norm{\mathbf u_N(t')-\phi_N}_{\FL^\infty}
    \norm{\phi_N}_{\FL^1}^2
  &\lesssim \beta_N\alpha_N\norm{\phi_N}_{\FL^1}^2.
\end{aligned}
\]
Therefore, for $t \le T_N$, we have
\[
\begin{aligned}
&\norm{\mathbf u_N(t)-\phi_N-A_{\mathrm{HNLS}}(t)}_{\FL^\infty}\\
&\quad \le
  \int_0^t C\beta_N\alpha_N\norm{\phi_N}_{\FL^1}^2\,dt'
  \lesssim
  T_N\beta_N\alpha_N\norm{\phi_N}_{\FL^1}^2
  =
  \beta_N^2\alpha_N,
\end{aligned}
\]
since $T_N\norm{\phi_N}_{\FL^1}^2=\beta_N$.  For $w_N$, the same computation
applies with
$\mathbf u_N(t')$ replaced by $w_N(t')$ and with $e^{it'\Phi(\bar n)}$
replaced by $1$.
This proves \eqref{eq:neg-higher}.

It remains to compare the two frozen cubic corrections.  In each nonzero
summand, the input frequencies lie in $Q_N\cup Q_{2N}$, and hence the output
frequency also has size $\lesssim N$.  Since $q$ is quadratic,
\[
  \abs{\Phi(\bar n)}\lesssim N^2.
\]
Therefore, since $tN^2 \ll 1$, we have
\[
  \biggl|\int_0^t(e^{it'\Phi(\bar n)}-1)\,dt'\biggr|
  \lesssim t^2N^2.
\]
Using \eqref{eq:FLinf-alg} for the triple convolution of $\phi_N$, we obtain
\[
\begin{aligned}
  \norm{A_{\mathrm{HNLS}}(t)-A_{\mathrm{ODE}}(t)}_{\FL^\infty}
  \lesssim T_N^2N^2\alpha_N\norm{\phi_N}_{\FL^1}^2
  =
  (T_NN^2)\beta_N\alpha_N.
\end{aligned}
\]
Combining this with \eqref{eq:neg-higher},
\[
  \norm{\mathbf u_N-w_N}_{L^\infty([0,T_N];\FL^\infty)}
  \lesssim
  \beta_N^2\alpha_N+(T_NN^2)\beta_N\alpha_N
  =o(\beta_N\alpha_N).
\]
Finally,
\[
\begin{aligned}
  \norm{P_{Q_0}(\mathbf u_N(T_N)-w_N(T_N))}_{H^s}
  &\le
  \mathcal L_s(R)
  \norm{\mathbf u_N-w_N}_{L^\infty([0,T_N];\FL^\infty)}
  =
  o(\beta_N\alpha_N)\mathcal L_s(R).
\end{aligned}
\]
Thus, we finish the proof.
\end{proof}

\begin{proof}[Proof of Theorem~\ref{thm:main} when $s<0$]
We choose the parameters so that
\begin{align}\label{eq:condi}
  \norm{\phi_N}_{H^s(\T^2)}\to0,
  \qquad
  T_N\to0,
  \qquad
  T_NN^2\to0,
  \qquad
  \beta_N\alpha_N\mathcal L_s(R)\to\infty.
\end{align}
Let $R\sim N^\theta$ with $0<\theta<1$, set $\alpha_N R=N^\gamma$ so that $\alpha_N\sim N^{\gamma-\theta}$, and let
$\beta_N\to0$ be specified below.  By Lemma~\ref{lem:neg-data-size},
\[
  \norm{\phi_N}_{\FL^1}\sim \alpha_N R^2\sim N^{\gamma+\theta},
  \qquad
  \norm{\phi_N}_{H^s}\sim N^{\gamma+s}.
\]
Thus $\gamma+s<0$ makes the initial norm small, while
$\gamma+\theta>1$ makes
\[
  T_NN^2=\beta_N\norm{\phi_N}_{\FL^1}^{-2}N^2
  \sim \beta_N N^{2-2(\gamma+\theta)}
  \to0.
\]

If $-1<s<0$, choose
\[
  \theta\in(1+s,1),
  \qquad
  \gamma\in\big(\max\{1-\theta,-\theta s\},-s\big).
\]
Then $\gamma+s<0$, $\gamma+\theta>1$, and $\gamma+\theta s>0$.  Since
$\mathcal L_s(R)\sim R^{1+s}$ by \eqref{eq:neg-Ls},
\[
  \alpha_N\mathcal L_s(R)
  \sim N^{\gamma-\theta}N^{\theta(1+s)}
  =
  N^{\gamma+\theta s}\to\infty.
\]
Choose $\beta_N=N^{-\delta}$ with $0<\delta<\gamma+\theta s$.

If $s=-1$, choose $\theta=\gamma\in(\frac12,1)$.  Then
$\gamma+s = \gamma - 1<0$, $\gamma+\theta>1$, $\alpha_N\sim1$, and
$\mathcal L_{-1}(R)\sim(\log R)^{1/2}$ by \eqref{eq:neg-Ls}.  Taking
$\beta_N=(\log N)^{-1/4}$ gives
\[
  \beta_N\alpha_N\mathcal L_{-1}(R)\to\infty.
\]

If $s<-1$, choose any $\theta\in(0,1)$ and then choose
\[
  \gamma\in\big(\max\{\theta,1-\theta\},-s\big).
\]
This is possible because $-s>1$.  Then $\gamma+s<0$,
$\gamma+\theta>1$, and $\gamma-\theta>0$.  Since
$\mathcal L_s(R)\sim1$ by \eqref{eq:neg-Ls}, choose $\beta_N=N^{-\delta}$ with
$0<\delta<\gamma-\theta$.

In all three cases, the four desired properties \eqref{eq:condi} hold.
Lemmas
\ref{lem:neg-ODE-lower} and \ref{lem:neg-ODE-approx} give
\[
  \norm{P_{Q_0}\mathbf u_N(T_N)}_{H^s(\T^2)}
  \gtrsim_s \beta_N\alpha_N\mathcal L_s(R).
\]
Since the linear group $e^{it(\partial_x^2-\partial_y^2)}$ is unitary on
$H^s(\T^2)$,
\[
  \norm{u_N(T_N)}_{H^s(\T^2)}
  =
  \norm{\mathbf u_N(T_N)}_{H^s(\T^2)}
  \ge
  \norm{P_{Q_0}\mathbf u_N(T_N)}_{H^s(\T^2)}
  \to\infty.
\]
At the same time,
$\norm{u_N(0)}_{H^s(\T^2)}=\norm{\phi_N}_{H^s(\T^2)}\to0$
and $T_N\to0$.  Given $\varepsilon\in(0,1)$, taking $N$ sufficiently large
therefore gives
\[
  \norm{u_N(0)}_{H^s(\T^2)}<\varepsilon,
  \qquad
  0<T_N<\varepsilon,
  \qquad
  \norm{u_N(T_N)}_{H^s(\T^2)}>\varepsilon^{-1}.
\]
This proves Theorem~\ref{thm:main} for $s<0$.
\end{proof}

\section{The subcritical case \texorpdfstring{$0<s<\frac12$}{0<s<1/2}}
\label{sec:subcritical}
We now turn to the positive subcritical range $0<s<\frac12$.  Throughout this
section, fix $s\in(0,\frac12)$.  Let
\begin{align}
  D_N(z):=\sum_{\abs k\le N}e^{ikz}
  =\frac{\sin((N+\frac12)z)}{\sin(z/2)}
\end{align}
be the Dirichlet kernel and define
\begin{equation}\label{eq:phiN}
  \phi_N(z):=N^{-\frac12-s}D_N(z).
\end{equation}

\begin{lemma}[Size of the Dirichlet profile]\label{lem:phi-Hs}
Uniformly in $N$, we have $\norm{\phi_N}_{H^s(\T)}\sim_s1$.
\end{lemma}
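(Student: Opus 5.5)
The plan is to compute the norm directly from the Fourier side. By construction, the coefficients of $\phi_N$ are
\[
  \widehat{\phi_N}(k)=N^{-\frac12-s}\mathbf 1_{\abs k\le N},
\]
so that
\[
  \norm{\phi_N}_{H^s(\T)}^2=N^{-1-2s}\sum_{\abs k\le N}\la k\ra^{2s}.
\]
This reduces the lemma to a two-sided estimate on a lattice-point sum.

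For the upper bound I would use $\la k\ra\le\la N\ra\le 2N$ for $\abs k\le N$ and $N\ge1$. There are $2N+1\le 3N$ terms, which gives
\[
  \sum_{\abs k\le N}\la k\ra^{2s}\le 3N(2N)^{2s}\lesssim_s N^{1+2s}.
\]

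For the lower bound I would keep only the indices $N/2\le\abs k\le N$. There are at least $N/2$ of these (for $N\ge2$, say), and each satisfies $\la k\ra\ge N/2$. Since $s>0$, this yields
\[
  \sum_{\abs k\le N}\la k\ra^{2s}\ge \tfrac N2\,(N/2)^{2s}\gtrsim_s N^{1+2s}.
\]
Alternatively, one can compare the sum with $\int_0^N x^{2s}\,dx=N^{1+2s}/(1+2s)$.

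Combining the two bounds, $\norm{\phi_N}_{H^s(\T)}^2\sim_s1$ uniformly in $N\ge1$; the finitely many small $N$ are trivially covered by adjusting constants. There is no real obstacle here. The only point needing care is that the lower bound uses $s>0$ so that the high frequencies dominate. Note also that the $L^2$ part, $\norm{\phi_N}_{L^2}\sim N^{-s}\to0$, is negligible, so the $H^s$ and $\dot H^s$ norms of $\phi_N$ are comparable.
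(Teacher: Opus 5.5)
Your proof is correct and follows the same route as the paper: identify $\widehat{\phi_N}(k)=N^{-\frac12-s}\mathbf 1_{\abs k\le N}$ and reduce to $\sum_{\abs k\le N}\la k\ra^{2s}\sim_s N^{1+2s}$. The paper simply asserts this estimate; your explicit upper and lower counting bounds verify it.
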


\begin{proof}
Since $\widehat{D_N}(k)=1$ for $\abs k\le N$ and $0$ otherwise,
\[
  \norm{D_N}_{H^s(\T)}^2
  =\sum_{\abs k\le N}\la k\ra^{2s}
  \sim_s N^{1+2s}.
\]
Multiplying by $N^{-1-2s}$ gives the result.
\end{proof}

The next lemma isolates a fixed subinterval on the right-hand side of the
central peak of the Dirichlet kernel.  On this interval, which has length
comparable to $N^{-1}$, the profile $\phi_N$ has size $N^{\frac12-s}$ and its
derivative is negative with magnitude $N^{\frac32-s}$.  Consequently,
$-\partial_z\abs{\phi_N}^2$ has size $N^{2-2s}$.  The schematic in Figure \ref{fig:DN-interval} shows
the Dirichlet kernel $D_N(z)$, with the interval
$I_N$ marked near the central peak.

\begin{figure}[htbp]
\centering
\begin{tikzpicture}[x=1.55cm,y=0.26cm,>=stealth]
  \def\NN{6}
  \def\MM{13}
  \def\PI{3.14159265}
  \def\IA{0.18}
  \def\IB{0.31}
  \fill[gray!12] (\IA,-2.8) rectangle (\IB,13.8);
  \draw[->] (-3.35,0) -- (3.35,0) node[right] {$z$};
  \draw[->] (0,-3.3) -- (0,14.8);
  \draw[domain=-3.1416:-0.02,samples=360,smooth,variable=\z,thick]
    plot ({\z},{sin(180*\MM*\z/(2*\PI))/sin(180*\z/(2*\PI))});
  \draw[domain=0.02:3.1416,samples=360,smooth,variable=\z,thick]
    plot ({\z},{sin(180*\MM*\z/(2*\PI))/sin(180*\z/(2*\PI))});
  \fill (0,13) circle (1.2pt);
  \draw[gray!70,very thick] (\IA,-2.8) -- (\IA,13.8);
  \draw[gray!70,very thick] (\IB,-2.8) -- (\IB,13.8);
  \node[right] at (0.40,11.5) {$I_N$};
  \node[above] at (1.55,2.5) {$D_N(z)$};
  \node[below] at (-3.1416,0) {$-\pi$};
  \node[below left] at (0,0) {$0$};
  \node[below] at (3.1416,0) {$\pi$};
\end{tikzpicture}
\caption{The Dirichlet kernel $D_N$ over one full
period in the original variable $z$.  The two vertical lines mark the interval
$I_N=[11\pi/(12M),\pi/M]$ with $M=2N+1$ used in Lemma~\ref{lem:DN-interval}.}
\label{fig:DN-interval}
\end{figure}

\begin{lemma}[A Dirichlet interval]\label{lem:DN-interval}
There exist constants $c,C>0$ and $N_0\in\N$, depending only on $s$, such
that for every $N\ge N_0$ there is an interval
\[
  I_N\subset\Big[\frac cN,\frac CN\Big],
  \qquad
  \abs{I_N}\ge \frac cN,
\]
such that, for all $z\in I_N$,
\begin{align}
  cN^{\frac12-s}\le \phi_N(z)&\le CN^{\frac12-s},\label{eq:phi-size-all}\\
  cN^{\frac32-s}\le -\phi_N'(z)&\le CN^{\frac32-s},\label{eq:phi-prime-all}\\
  cN^{2-2s}\le -\partial_z\abs{\phi_N(z)}^2&\le CN^{2-2s}.
  \label{eq:phi-square-prime-all}
\end{align}
\end{lemma}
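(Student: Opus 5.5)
We take the interval suggested by Figure~\ref{fig:DN-interval}: with $M=2N+1$, set
\[
  I_N:=\Big[\frac{11\pi}{12M},\frac{\pi}{M}\Big],
\]
so that $I_N\subset[c/N,C/N]$ and $\abs{I_N}=\pi/(12M)\ge c/N$. The plan is to write
\[
  D_N(z)=\frac{\sin(Mz/2)}{\sin(z/2)},
\]
rescale via $\theta=Mz/2\in[11\pi/24,\pi/2]$, and compare $\sin(z/2)$ with $z/2$. On $I_N$ we have $z\lesssim 1/N$, so $\sin(z/2)=\frac z2\,(1+O(N^{-2}))$. We also use the derivative identity
\[
  \frac{d}{dz}\frac{1}{\sin(z/2)}=-\frac{\cos(z/2)}{2\sin^2(z/2)}.
\]

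\textbf{Size of $\phi_N$.} On $I_N$ the phase $\theta$ lies in $[11\pi/24,\pi/2]$, so $\sin\theta\in[\sin(11\pi/24),1]$, which is bounded below by an absolute constant. Also $\sin(z/2)\sim 1/N$ on $I_N$. Hence $D_N(z)\sim N$, and multiplying by $N^{-1/2-s}$ gives \eqref{eq:phi-size-all}.

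\textbf{Derivative of $\phi_N$.} Differentiating gives
\[
  D_N'(z)=\frac{(M/2)\cos\theta}{\sin(z/2)}-\frac{\sin\theta\,\cos(z/2)}{2\sin^2(z/2)}.
\]
This is the step that needs care, since the two terms have opposite signs. On $I_N$ we have $\cos\theta\in[0,\cos(11\pi/24)]$, and $\cos(11\pi/24)=\sin(\pi/24)\approx0.13$, so this is small. Substituting $\sin(z/2)\approx z/2=\theta/M$, the first term is at most about $\frac{M^2}{2\theta}\sin(\pi/24)$, while the second is at least about $\frac{M^2}{2\theta^2}\sin(11\pi/24)$. Since $\theta\le\pi/2$, the ratio of the first to the second is at most $\theta\tan(\pi/24)\le\frac\pi2\cdot 0.132<1/4$. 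The second term therefore dominates, up to $O(N^{-2})$ relative errors from replacing $\sin(z/2)$ by $z/2$ and $\cos(z/2)$ by $1$; these errors are absorbed by taking $N\ge N_0$. This gives $-D_N'(z)\sim M^2\sim N^2$ on $I_N$. The upper bound is immediate from the same formula, and multiplying by $N^{-1/2-s}$ yields \eqref{eq:phi-prime-all}.

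\textbf{Derivative of $\abs{\phi_N}^2$.} Since $\phi_N$ is real and positive on $I_N$, we have $-\partial_z\abs{\phi_N}^2=-2\phi_N\phi_N'$. Multiplying the two-sided bounds \eqref{eq:phi-size-all} and \eqref{eq:phi-prime-all} gives \eqref{eq:phi-square-prime-all} with size $N^{2-2s}$.

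The only delicate point is the sign competition in $D_N'$. It is resolved by choosing $I_N$ just left of the first positive critical point of $\sin(Mz/2)$, so that the cosine term is a small fraction of the dominant term. All constants produced are absolute, and hence trivially depend only on $s$.
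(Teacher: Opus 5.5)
Your proposal is correct and follows the paper's proof essentially step for step: the same interval $I_N=[\frac{11\pi}{12M},\frac{\pi}{M}]$ with $M=2N+1$, the same rescaling $\theta=Mz/2\in[\frac{11\pi}{24},\frac{\pi}{2}]$, and the same quotient-rule sign comparison for $D_N'$, followed by $\partial_z\abs{D_N}^2=2D_ND_N'$. The only cosmetic difference is that the paper bounds the numerator $\frac M2\cos\theta\sin(z/2)-\frac12\sin\theta\cos(z/2)$ exactly, using $M\sin(z/2)\le\theta\le\frac\pi2$ and $\cos(z/2)\ge\frac{\sqrt3}{2}$ to get a value below $-\frac14$ for every $N$, whereas you compare the two terms asymptotically (ratio $\theta\cot\theta\le\frac\pi2\tan\frac{\pi}{24}<\frac14$) and absorb the $O(N^{-2})$ relative errors by taking $N\ge N_0$.
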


\begin{proof}
Set $M=2N+1$ and
\begin{align}\label{eq:IN}
  I_N:=\Big[\frac{11\pi}{12M},\frac{\pi}{M}\Big].
\end{align}
Then $\abs{I_N}\gtrsim N^{-1}$ and $I_N\subset[c/N,C/N]$.  For
$z\in I_N$, put $\theta=Mz/2$.  Then
\[
  \theta\in\Big[\frac{11\pi}{24},\frac{\pi}{2}\Big],
\]
so $\sin\theta$ is bounded above and below by positive constants, while
$\sin(z/2)\sim z\sim N^{-1}$.  Hence
\[
  D_N(z)=\frac{\sin(Mz/2)}{\sin(z/2)} =\frac{\sin(\theta)}{\sin(z/2)} \sim N.
\]
Differentiating,
\[
\begin{split}
  D_N'(z) & =
  \frac{\frac M2\cos(Mz/2)\sin(z/2)
  -\frac12\sin(Mz/2)\cos(z/2)}{\sin^2(z/2)} \\
  & =
  \frac{\frac M2\cos(\theta)\sin(z/2)
  -\frac12\sin(\theta)\cos(z/2)}{\sin^2(z/2)}.
 \end{split}
\]
On $I_N$, the numerator is negative and bounded away from $0$.  Indeed,
$M\sin(z/2)\le Mz/2=\theta\le\pi/2$, $\cos(z/2)\ge \sqrt3/2$, and
$\theta\in[11\pi/24,\pi/2]$.  Therefore
\[
\begin{aligned}
  &\frac M2\cos(\theta)\sin(z/2)
  -\frac12\sin(\theta)\cos(z/2) \\
  &\qquad\le
  \frac{\pi}{4}\cos\Big(\frac{11\pi}{24}\Big)
  -\frac{\sqrt3}{4}\sin\Big(\frac{11\pi}{24}\Big) \\
  &\qquad\le \frac{\pi^2}{96}-\frac38
  < -\frac14 .
\end{aligned}
\]
Here we used $\cos(11\pi/24)=\sin(\pi/24)\le \pi/24$ and
$\sin(11\pi/24)=\cos(\pi/24)\ge \sqrt3/2$.
Since $\sin(z/2)\sim N^{-1}$, this gives $-D_N'(z)\sim N^2$.  Because
$D_N$ is real on the real line,
\[
  \partial_z\abs{D_N(z)}^2=2D_N(z)D_N'(z)\sim -N^3.
\]
Multiplying by the powers in \eqref{eq:phiN} gives
\eqref{eq:phi-size-all}--\eqref{eq:phi-square-prime-all}.
\end{proof}

Lemma~\ref{lem:DN-interval} supplies the quantitative input for the following
lower bound, which is the main estimate in the subcritical argument.  We choose
a spatial increment $h$ so that the nonlinear phase
$t\rho^2\abs{\phi_N}^2$ changes by a definite amount on $I_N$, while the
amplitude $\phi_N$ changes only by a small relative amount.  The
fixed-increment estimate from Lemma~\ref{lem:fixed-increment} then converts
this pointwise separation into an $H^s$ lower bound.  The large parameter
driving the estimate is precisely $t\rho^2N^{1-2s}$.

\begin{proposition}[Subcritical lower bound]\label{prop:sub-lower}
There exist constants $c>0$, $\Lambda\ge1$, and $N_0\in\N$, depending only
on $s$, such that the following holds.  Let $N\ge N_0$, $0<\rho\le1$, and
$t>0$.  If
\[
  t\rho^2N^{1-2s}\ge \Lambda,
\]
then
\[
  \norm{\rho\phi_N e^{it\rho^2\abs{\phi_N}^2}}_{H^s(\T)}
  \ge c\rho\big(t\rho^2N^{1-2s}\big)^s.
\]
\end{proposition}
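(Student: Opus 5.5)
We apply Lemma~\ref{lem:fixed-increment} to $g:=\rho\phi_N e^{it\rho^2\abs{\phi_N}^2}$ with a small increment $h>0$, and bound $\norm{g(\cdot+h)-g}_{L^2}$ from below by restricting the integral to a subinterval of the Dirichlet interval $I_N$ of Lemma~\ref{lem:DN-interval}. Since $\norm{g}_{H^s}\gtrsim\norm{g}_{\dot H^s}$, it then suffices to show $\norm{g(\cdot+h)-g}_{L^2(I_N')}^2\gtrsim \rho^2 N^{1-2s}$, for a suitable $h$ and a subinterval $I_N'\subset I_N$ with $\abs{I_N'}\gtrsim N^{-1}$. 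Indeed, the scale will be $h\sim (t\rho^2N^{2-2s})^{-1}$. This gives
\[
\norm{g}_{\dot H^s}^2\gtrsim h^{-2s}\rho^2N^{1-2s}\sim \rho^2 (t\rho^2N^{2-2s})^{2s}N^{1-2s}=\rho^2\big(t\rho^2N^{1-2s}\big)^{2s},
\]
because $N^{(2-2s)2s}N^{1-2s}=N^{(1-2s)2s}\cdot N^{2s}\cdot N^{1-2s}$. Let us recheck the exponents: $(2-2s)2s+1-2s$ versus $(1-2s)2s$ differ by $4s-4s^2+1-2s-2s+4s^2=1$. So the extra factor $N^{1}$ must be compensated by the interval length $N^{-1}$; the target local bound is therefore $\norm{g(\cdot+h)-g}_{L^2(I_N')}^2\gtrsim \rho^2N^{1-2s}\cdot N\cdot N^{-1}$. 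Concretely, the integrand has size $(\rho N^{1/2-s})^2$ on a set of length $\sim N^{-1}$, giving $\rho^2N^{-2s}$, and then $h^{-2s}\rho^2N^{-2s}=\rho^2(t\rho^2N^{1-2s})^{2s}$, as desired.

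The key steps are as follows. Write $\Psi(z)=t\rho^2\abs{\phi_N(z)}^2$. For $z,z+h\in I_N$, the mean value theorem and \eqref{eq:phi-square-prime-all} give
\[
\Psi(z)-\Psi(z+h)\in [c\,t\rho^2N^{2-2s}h,\;C\,t\rho^2N^{2-2s}h].
\]
Choose $h:=\frac{\pi}{C\,t\rho^2N^{2-2s}}$. The phase difference then lies in $[\pi c/C,\pi]\subset[a,b]\subset(0,2\pi)$, so \eqref{eq:phase-away} yields $\abs{e^{i(\Psi(z+h)-\Psi(z))}-1}\ge c_1$. The hypothesis $t\rho^2N^{1-2s}\ge\Lambda$ gives $h\le \pi/(C\Lambda N)$, so for $\Lambda$ large we have $h\le\abs{I_N}/2$. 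We take $I_N'$ to be the left half of $I_N$; then $z+h\in I_N$ for all $z\in I_N'$.

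Next, decompose
\[
g(z+h)-g(z)=\rho\phi_N(z+h)\big(e^{i\Psi(z+h)}-e^{i\Psi(z)}\big)+\rho\big(\phi_N(z+h)-\phi_N(z)\big)e^{i\Psi(z)}.
\]
By \eqref{eq:phi-size-all}, the first term has modulus at least $c_1 c\rho N^{1/2-s}$. By \eqref{eq:phi-prime-all}, the second is at most $C\rho N^{3/2-s}h\le (C'/\Lambda)\rho N^{1/2-s}$. Taking $\Lambda$ large, the sum is at least $\frac{c_1c}{2}\rho N^{1/2-s}$ on $I_N'$. Squaring and integrating over $I_N'$ gives $\gtrsim\rho^2N^{-2s}$, and since $0<h\le\pi$, Lemma~\ref{lem:fixed-increment} concludes the proof.

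The main obstacle is the balancing in the choice of $h$. The phase must rotate by an amount in a compact subset of $(0,2\pi)$, uniformly on $I_N'$, which needs the two-sided bound \eqref{eq:phi-square-prime-all}. At the same time, the amplitude variation must stay small relative to the amplitude, and the shift must keep $z+h$ inside $I_N$. Both of the latter requirements reduce to $hN\ll1$, which is exactly where the threshold $\Lambda$ enters.
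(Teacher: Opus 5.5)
Your proposal is correct and follows the paper's proof essentially step for step: the same increment $h\sim(t\rho^2N^{2-2s})^{-1}$ (your $h=\pi/(Ct\rho^2N^{2-2s})$ is the paper's choice with $\alpha=\pi/C$), the same restriction to a subinterval of $I_N$ of length $\gtrsim N^{-1}$, the same phase-versus-amplitude triangle inequality via \eqref{eq:phase-away}, and the same final application of Lemma~\ref{lem:fixed-increment}. The only blemish is the exponent bookkeeping in your first paragraph, where the initially stated target $\rho^2N^{1-2s}$ is off by a factor of $N$; you do correct it to the right local bound $\rho^2N^{-2s}$, and that corrected bound is what your key-steps argument actually proves.
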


\begin{proof}
Let $I_N$ be the interval in Lemma~\ref{lem:DN-interval}.  Choose
$\alpha\in(0,1]$ so small that $C\alpha<2\pi$, where $C$ is the upper
constant in \eqref{eq:phi-square-prime-all}, and set
\begin{align}\label{eq:h}
  h:=\frac{\alpha}{t\rho^2N^{2-2s}}.
\end{align}
If $\Lambda$ is sufficiently large (depending only on $\alpha$), then
\[
  h\le \frac{\alpha}{\Lambda N}\le \frac{\abs{I_N}}4,
\]
where $I_N$ is the interval given in \eqref{eq:IN}.
Thus
\[
  J_N:=\{z\in I_N:\ z+h\in I_N\}
\]
satisfies $\abs{J_N}\gtrsim N^{-1}$.

For $z\in J_N$, the mean value theorem and
\eqref{eq:phi-square-prime-all} give
\[
  c\alpha
  \le
  t\rho^2\abs{\abs{\phi_N(z+h)}^2-\abs{\phi_N(z)}^2}
  \le C\alpha<2\pi,
\]
since $[z,z+h] \subset I_N$.
Hence \eqref{eq:phase-away} implies
\begin{align}\label{eq:low1}
  \abs{
  e^{it\rho^2\abs{\phi_N(z+h)}^2}
  -e^{it\rho^2\abs{\phi_N(z)}^2}
  }\ge c.
\end{align}
On the other hand, \eqref{eq:phi-prime-all} gives
\begin{align}\label{eq:up1}
  \abs{\phi_N(z+h)-\phi_N(z)}
  \le CN^{\frac32-s}h
  \le C\alpha (t\rho^2N^{1-2s})^{-1}N^{\frac12-s}.
\end{align}
Let
\[
  f(t,z):=\rho\phi_N(z)e^{it\rho^2\abs{\phi_N(z)}^2}.
\]
Taking $\Lambda$ large if necessary, we may assume that
\begin{align}\label{eq:com1}
  C\alpha(t\rho^2N^{1-2s})^{-1}N^{\frac12-s}
  \le C \alpha \Lambda^{-1} N^{\frac12-s}
  \ll N^{\frac12-s}.
\end{align}
Then, for every $z\in J_N$, the triangle inequality, together with
\eqref{eq:phi-size-all}, \eqref{eq:low1}, \eqref{eq:up1}, and
\eqref{eq:com1}, gives
\[
\begin{aligned}
  \abs{f(t,z+h)-f(t,z)}
  &=\rho\Big|
  \phi_N(z+h)e^{it\rho^2\abs{\phi_N(z+h)}^2}
  \\
  &\qquad\qquad
  -\phi_N(z)e^{it\rho^2\abs{\phi_N(z)}^2}
  \Big| \\
  &\ge \rho\phi_N(z)
  \Big|e^{it\rho^2\abs{\phi_N(z+h)}^2}
  \\
  &\qquad\qquad
  -e^{it\rho^2\abs{\phi_N(z)}^2}\Big|
  -\rho\abs{\phi_N(z+h)-\phi_N(z)} \\
  &\ge c\rho N^{\frac12-s}
  -C\alpha\rho
    \big(t\rho^2N^{1-2s}\big)^{-1}N^{\frac12-s} \\
  &\ge c\rho N^{\frac12-s}.
\end{aligned}
\]
Since $\abs{J_N}\gtrsim N^{-1}$ and the pointwise lower bound above holds
on $J_N$,
\[
  \norm{f(t,\cdot+h)-f(t,\cdot)}_{L^2(\T)}^2
  \ge \norm{f(t,\cdot+h)-f(t,\cdot)}_{L^2(J_N)}^2
  \ge c\rho^2N^{-2s}.
\]
Lemma~\ref{lem:fixed-increment} and \eqref{eq:h} give
\[
\begin{aligned}
  \norm{f(t)}_{\dot H^s(\T)}^2
  &\ge c h^{-2s} \norm{f(t,\cdot+h)-f(t,\cdot)}_{L^2(\T)}^2 \\
  &\ge c h^{-2s}\rho^2N^{-2s} \\
  &= c \alpha^{-2s} \big(t\rho^2N^{2-2s}\big)^{2s} \rho^2N^{-2s} \\
  &\ge c\rho^2\big(t\rho^2N^{2-2s}\big)^{2s}N^{-2s} \\
  &=c\rho^2\big(t\rho^2N^{1-2s}\big)^{2s}.
\end{aligned}
\]
Taking square roots proves the proposition.
\end{proof}

We can now prove Theorem~\ref{thm:main} when $0<s<\frac12$.

\begin{proof}[Proof of Theorem~\ref{thm:main} when $0<s<\frac12$]
Let $\varepsilon\in(0,1)$.  By Lemmas~\ref{lem:phi-Hs} and
\ref{lem:T2-to-T}, there is $C_s>0$ such that
\[
  \norm{\phi_N(x-y)}_{H^s(\T^2)}\le C_s
\]
for all $N$.  Choose $\rho=a_s\varepsilon$, where $a_sC_s<1$.  Then
\[
  \norm{\rho\phi_N(x-y)}_{H^s(\T^2)}<\varepsilon.
\]
Let $c_s>0$ be the constant obtained by combining
Proposition~\ref{prop:sub-lower} with Lemma~\ref{lem:T2-to-T}.  Choose
$A_s\ge\Lambda$ so large that
\begin{align}\label{eq:low2}
  c_s a_s A_s^s>1.
\end{align}
Finally choose $N$ so large that
\[
  t_\varepsilon:=
  \frac{A_s\varepsilon^{-2/s}}{\rho^2N^{1-2s}}<\varepsilon.
\]
This is possible since $1-2s>0$.  Set
\[
  u_0(x,y):=\rho\phi_N(x-y).
\]
Then $u_0\in C^\infty(\T^2)$, and the exact smooth solution is
\[
  u(t,x,y)=\rho\phi_N(x-y)
  e^{it\rho^2\abs{\phi_N(x-y)}^2}.
\]
Moreover
\[
  t_\varepsilon\rho^2N^{1-2s}
  =A_s\varepsilon^{-2/s}\ge\Lambda.
\]
Thus Proposition~\ref{prop:sub-lower} and \eqref{eq:low2} give
\[
\begin{aligned}
  \norm{u(t_\varepsilon)}_{H^s(\T^2)}
  &\ge c_s\rho
  \big(t_\varepsilon\rho^2N^{1-2s}\big)^s =c_s a_s A_s^s\varepsilon^{-1}
  >\varepsilon^{-1}.
\end{aligned}
\]
This proves the subcritical case.
\end{proof}

\section{The critical case \texorpdfstring{$s=\frac12$}{s=1/2}}
\label{sec:critical}

We finally treat the endpoint $s=\frac12$.  The subcritical Dirichlet-kernel
construction no longer gives a power gain, since the factor $N^{1-2s}$ becomes
$1$ at the endpoint.  We replace it by a logarithmic profile: after
normalization, the $\dot H^{1/2}$ norm stays bounded, while the pointwise size
on logarithmic spatial scales remains large enough to create a substantial
nonlinear phase.  The lower bound below captures this logarithmic accumulation
through the double-integral characterization of $\dot H^{1/2}(\T)$.

Let
\begin{equation} \label{eq:log}
  F_N(z):=\sum_{k=1}^N\frac{e^{ikz}}k,
  \qquad
  L:=\log N,
  \qquad
  g_N:=L^{-1/2}F_N.
\end{equation}
Then
\begin{equation}\label{eq:g-Hhalf-all}
  \norm{g_N}_{\dot H^{1/2}(\T)}^2
  =L^{-1}\sum_{k=1}^N\frac1k\sim1,
  \qquad
  \norm{g_N}_{H^{1/2}(\T)}\lesssim1.
\end{equation}
The logarithmic profile above \eqref{eq:log} is inspired by the endpoint counterexample of
Liu and Zheng~\cite{LiuZhengHNLS}.  Their result is stated for the cubic HNLS
on $\T^3$, but the relevant high-frequency profile is essentially
one-dimensional: after choosing the appropriate resonant direction, the core
object is a logarithmic Fourier sum of the form
$\sum_{k\le N}e^{ikz}/k$.  This is enough for their purpose, namely to show
that the solution map at the origin fails to be $C^3$ in the endpoint space.

For norm inflation, however, the third-order obstruction in \cite{LiuZhengHNLS} alone is not enough.
We need information on the full nonlinear phase
$e^{i\tau\abs{g_N}^2}$, not only on a Taylor coefficient.  The rest of this
section therefore studies the pointwise size and increment behavior of $F_N$
on logarithmic spatial scales.  In particular, the lower bound uses how
$\abs{F_N}^2$ changes between $z=e^{-m}$ and $e^{-(m+\eta)}$, and how these
changes accumulate in the double-integral formula for $\dot H^{1/2}(\T)$.
The schematic in Figure \ref{fig:FN-log-region} shows
the logarithmic profile $\operatorname{Re} F_N(z)$, with the interval
$I_N$ marked near the central peak.

\begin{figure}[htbp]
\centering
\begin{tikzpicture}[x=1.55cm,y=0.85cm,>=stealth]
  \def\JA{0.299}
  \def\JB{0.447}
  \fill[gray!12] (\JA,-1.05) rectangle (\JB,2.55);
  \draw[->] (-3.35,0) -- (3.35,0) node[right] {$z$};
  \draw[->] (0,-1.15) -- (0,2.75);
  \draw[domain=-3.1416:3.1416,samples=500,smooth,variable=\z,thick]
    plot ({\z},{
      cos(180*\z/3.14159265)
      +cos(360*\z/3.14159265)/2
      +cos(540*\z/3.14159265)/3
      +cos(720*\z/3.14159265)/4
      +cos(900*\z/3.14159265)/5
    });
  \draw[gray!70,very thick] (\JA,-1.05) -- (\JA,2.55);
  \draw[gray!70,very thick] (\JB,-1.05) -- (\JB,2.55);
  \node[right] at (0.52,2.05) {$m\in[\frac L2,\frac{3L}{4}]$};
  \node[above] at (1.55,0.25) {$\operatorname{Re}F_N(z)$};
  \node[below] at (-3.1416,0) {$-\pi$};
  \node[below left] at (0,0) {$0$};
  \node[below] at (3.1416,0) {$\pi$};
\end{tikzpicture}
\caption{The real part of the logarithmic profile $F_N$ over one period.  The marked strip corresponds to the logarithmic region $z=e^{-m}$, $m\in[L/2,3L/4]$, used in
Lemma~\ref{lem:FN-asymptotics-all}.}
\label{fig:FN-log-region}
\end{figure}

\begin{lemma}[Logarithmic asymptotics]\label{lem:FN-asymptotics-all}
There exist absolute constants $c,C>0$ such that the following holds for all
sufficiently large $N$.  Let
\begin{align}
  m\in\Big[\frac L2,\frac{3L}{4}\Big],
  \qquad
  0<\eta\le1,
  \qquad
  z=e^{-m},
  \qquad
  z_\eta=e^{-(m+\eta)}.
\end{align}
Then
\begin{align}
  F_N(z) - m - i\frac\pi2 & = O(e^{-cL}),\\
  F_N(z_\eta)-F_N(z)&=\eta+O(e^{-cL}),\\
  cL\le\abs{F_N(z)}&\le CL,\\
  \bigl|
  \abs{F_N(z_\eta)}^2-\abs{F_N(z)}^2-(2m\eta+\eta^2)
  \bigr|&\le Ce^{-cL}L.
\end{align}
Consequently, for each fixed $\eta_*\in(0,1]$, there are constants
$c_{\eta_*},C_{\eta_*}>0$ such that
\[
  c_{\eta_*}L
  \le
  \abs{F_N(z_\eta)}^2-\abs{F_N(z)}^2
  \le C_{\eta_*}L
\]
whenever $\eta_*\le\eta\le1$.
\end{lemma}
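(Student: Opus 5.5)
The starting point is the classical identity $\sum_{k\ge1}e^{ikz}/k=-\log(1-e^{iz})$ for $0<z<2\pi$. We compare $F_N$ with it by writing
\[
F_N(z)=-\log(1-e^{iz})-\sum_{k>N}\frac{e^{ikz}}{k}.
\]

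The tail is controlled by summation by parts (Abel summation with the bounded partial sums $\sum_{k=N+1}^{K}e^{ikz}$, which are $O(1/z)$). This gives
\[
\Bigl|\sum_{k>N}e^{ikz}/k\Bigr|\lesssim \frac{1}{Nz}.
\]
For $z=e^{-m}$ with $m\le 3L/4$ we have $Nz\ge N^{1/4}=e^{L/4}$, so the tail is $O(e^{-L/4})$. The same holds at $z_\eta$, since $m+\eta\le 3L/4+1$ and hence $Nz_\eta\ge e^{-1}N^{1/4}$.

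For the main term, use
\[
1-e^{iz}=-2i\sin(z/2)e^{iz/2},
\]
so that
\[
-\log(1-e^{iz})=-\log(2\sin(z/2))+i\Bigl(\frac{\pi}{2}-\frac z2\Bigr)
\]
with the principal branch, valid for $0<z<2\pi$. Since $2\sin(z/2)=z(1+O(z^2))$, we get $-\log(2\sin(z/2))=-\log z+O(z^2)=m+O(e^{-2m})$. The imaginary part is $\pi/2+O(e^{-m})$. Since $m\ge L/2$, both errors are $O(e^{-L/2})$. This proves the first asymptotic with $c=1/4$.

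Applying the same expansion at $z_\eta$ gives $F_N(z_\eta)=m+\eta+i\pi/2+O(e^{-cL})$. Subtracting yields the increment formula $F_N(z_\eta)-F_N(z)=\eta+O(e^{-cL})$. The size bound $cL\le|F_N(z)|\le CL$ is immediate from $m\in[L/2,3L/4]$.

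For the squared moduli, write $F_N(z)=m+i\pi/2+E_1$ and $F_N(z_\eta)=m+\eta+i\pi/2+E_2$ with $|E_j|\lesssim e^{-cL}$. Then
\[
|F_N(z_\eta)|^2-|F_N(z)|^2=(m+\eta)^2-m^2+O\bigl((L+1)e^{-cL}\bigr)=2m\eta+\eta^2+O(Le^{-cL}),
\]
where the cross terms are bounded by $|E_j|\cdot O(L)$ and $|E_j|^2$. For the consequence, when $\eta_*\le\eta\le1$ we have
\[
2m\eta+\eta^2\in[\eta_* L,\,(3L/2)+1],
\]
and the error $Le^{-cL}$ is negligible for large $N$. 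This gives the stated two-sided bound with $c_{\eta_*}\sim\eta_*$.

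The only delicate step is the uniform tail bound at $z\approx N^{-3/4}$: the Abel summation must be done carefully so that the constant is uniform in $z\in(0,\pi]$. It is also why the upper limit $3L/4$ (rather than $L$) is needed, so that $Nz\to\infty$ at a power rate.
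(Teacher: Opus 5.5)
Your proof is correct and follows essentially the same route as the paper. You package the paper's two real Fourier series $\sum_{k\ge1}\sin(kz)/k$ and $\sum_{k\ge1}\cos(kz)/k$ into the single identity $-\log(1-e^{iz})$, bound the tail $\sum_{k>N}e^{ikz}/k$ by Abel summation as $O((Nz)^{-1})=O(e^{-L/4})$, and then expand $(m+\eta)^2-m^2$ directly.
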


\begin{proof}
We use the classical Fourier series, valid for $0<z<2\pi$,
\begin{equation}
  \sum_{k=1}^\infty \frac{\sin(kz)}k=\frac{\pi-z}{2},
  \qquad
  \sum_{k=1}^\infty \frac{\cos(kz)}k=-\log\big(2\sin(z/2)\big).
\end{equation}
Hence
\[
  \sum_{k=1}^\infty \frac{e^{ikz}}k
  =-\log\big(2\sin(z/2)\big)+i\Big(\frac\pi2-\frac z2\Big)
  =m+i\frac\pi2+O(e^{-L/2}),
\]
provided $z=e^{-m}$ and $m\ge L/2$.
It remains to estimate the tail of the series.
Let
\[
  S_M(z)=\sum_{k=1}^M e^{ikz}.
\]
By summation by parts, for $M>N$,
\[
\begin{aligned}
  \sum_{k=N+1}^M\frac{e^{ikz}}{k}
  &=
  \frac{S_M(z)}{M}
  -\frac{S_N(z)}{N+1}
  +\sum_{k=N+1}^{M-1}S_k(z)
    \biggl(\frac1k-\frac1{k+1}\biggr).
\end{aligned}
\]
Letting $M\to\infty$, since
$\abs{S_M(z)}\le C z^{-1}$ for $0<z\le1$, we obtain
\[
  \abs{\sum_{k>N}\frac{e^{ikz}}k}\le \frac{C}{Nz}=Ce^{-L+m},
\]
because $L=\log N$ and $z=e^{-m}$.
In the range $m\in[L/2,3L/4]$, this bound is $O(e^{-L/4})$.  The same
estimate holds at $z_\eta=e^{-(m+\eta)}$, since $\eta\le1$.
Therefore
\[
  F_N(z)=m+i\frac\pi2+O(e^{-cL}),
  \qquad
  F_N(z_\eta)=m+\eta+i\frac\pi2+O(e^{-cL}),
\]
This proves the first two estimates.  The size estimate follows from
$m\sim L$.  Finally,
\[
  \abs{F_N(z_\eta)}^2-\abs{F_N(z)}^2
  =(m+\eta)^2-m^2+O(e^{-cL}L)
  =2m\eta+\eta^2+O(e^{-cL}L),
\]
and the final consequence follows for $\eta_*\le\eta\le1$.
\end{proof}

\begin{proposition}[Endpoint lower bound]\label{prop:end-lower}
There exist constants $c>0$, $Q_0\ge1$, $\kappa>0$, and $N_0\in\N$ such
that the following holds.  Let $N\ge N_0$, $L=\log N$, and $\tau>0$.  If
\[
  Q:=\tau L,
  \qquad
  Q_0\le Q\le \kappa L,
\]
then
\[
  \norm{g_Ne^{i\tau\abs{g_N}^2}}_{\dot H^{1/2}(\T)}
  \ge cQ.
\]
\end{proposition}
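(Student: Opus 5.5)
We plan to use the double-integral characterization of Lemma~\ref{lem:Hs-difference} with $s=\frac12$. Write $G:=g_Ne^{i\tau\abs{g_N}^2}$. Then
\[
  \norm{G}_{\dot H^{1/2}(\T)}^2
  \gtrsim
  \iint_{\T\times\T}\frac{\abs{G(z)-G(w)}^2}{d_\T(z,w)^{2}}\,dz\,dw.
\]
We restrict the integral to the region where $z=e^{-m}$ and $w=e^{-(m+\eta)}$, with $m\in[L/2,3L/4]$ and $\eta\in[\eta_*,1]$ for a suitable fixed $\eta_*$. Since $z-w\sim z$ in this region, the measure satisfies $dz\,dw\sim zw\,dm\,d\eta\sim z^2\,dm\,d\eta$, and the kernel is $d_\T(z,w)^{-2}\sim z^{-2}$. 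Hence each logarithmic scale contributes $O(1)$ weight, and the region yields
\[
  \int_{L/2}^{3L/4}\int_{\eta_*}^{1}\abs{G(e^{-m})-G(e^{-(m+\eta)})}^2\,d\eta\,dm .
\]
It therefore suffices to show that the integrand is $\gtrsim L\,Q^2/L=Q^2$ on a subset of $m$ of measure $\gtrsim L/Q$. A cleaner way to organize this is to prove $\abs{G(z)-G(w)}^2\gtrsim L$ on a set of $(m,\eta)$ of measure $\gtrsim Q^2$; we will aim for this.

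Next we apply Lemma~\ref{lem:FN-asymptotics-all}. It gives $\abs{g_N(z)}\sim L^{1/2}$ and $g_N(w)-g_N(z)=L^{-1/2}(\eta+O(e^{-cL}))$, so the amplitude changes only by $O(L^{-1/2})$. It also gives the phase difference
\[
  \tau\big(\abs{g_N(w)}^2-\abs{g_N(z)}^2\big)
  =\frac{\tau}{L}\big(2m\eta+\eta^2\big)+O(e^{-cL})
  =\frac{Q}{L^2}\,2m\eta+O(Q/L^2)+O(e^{-cL}).
\]
With $m\sim L$, this equals $\approx 2Q\eta\,(m/L^2)\cdot L$, i.e. a phase of size $\sim Q\eta/L\cdot(m/L)\cdot L$. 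Recomputing carefully: $\tau m\eta/L\cdot 2=2Q m\eta/L^2$, and since $m\in[L/2,3L/4]$ this is $\sim Q\eta$. Because $Q\ge Q_0$ is large, fixing $\eta_*=1$ does not give a definite phase range. Instead we restrict to $\eta\in[\beta/Q,2\beta/Q]$, where $\beta$ is chosen so that the phase lies in a fixed compact subset of $(0,2\pi)$. By \eqref{eq:phase-away}, $\abs{e^{i\theta_w}-e^{i\theta_z}}\ge c$, and consequently
\[
  \abs{G(z)-G(w)}\ge c\,\abs{g_N(z)}-\abs{g_N(w)-g_N(z)}\gtrsim L^{1/2}.
\]
Here we need the last part of Lemma~\ref{lem:FN-asymptotics-all} for small $\eta$, which still holds because its error terms are $O(e^{-cL}L)$. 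The condition $Q\le\kappa L$ ensures that $\beta/Q\gg e^{-cL}$ and that the quadratic correction $\eta^2$ is harmless.

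For the measure count, we now drop the requirement $w<z$ being close and take the variable $r=\log(z/w)=\eta$ over the full range. The estimate becomes
\[
  \int_{L/2}^{3L/4}\int_{\beta/Q}^{2\beta/Q}
  \abs{G(e^{-m})-G(e^{-(m+\eta)})}^2\,\frac{d\eta}{\eta^{2}}\,dm ,
\]
where the weight $\eta^{-2}$ arises because $d(z,w)\sim z\eta$ when $\eta$ is small. This is $\gtrsim L\cdot L\cdot Q$, which gives far more than the required $Q^2$ once $Q\le\kappa L$. In fact a sharper count is needed, since the $\dot H^{1/2}$ norm of $g_N$ itself must be consistent with the bound. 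The main obstacle is to perform this Jacobian and weight bookkeeping precisely: converting $dz\,dw/d_\T(z,w)^2$ into $dm\,d\eta/\eta^2$ for small $\eta$, and checking that the lower bound yields $\norm{G}_{\dot H^{1/2}}^2\gtrsim Q^2$ and hence $\norm{G}_{\dot H^{1/2}}\ge cQ$. This bookkeeping also requires careful choice of the constants $\beta$, $Q_0$ and $\kappa$ so that the asymptotic error terms stay below the main term.
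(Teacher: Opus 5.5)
There is a genuine gap, caused by an arithmetic slip in the phase. With $\tau=Q/L$, Lemma~\ref{lem:FN-asymptotics-all} gives
\[
  \Delta:=\tau\big(\abs{g_N(w)}^2-\abs{g_N(z)}^2\big)
  =\frac{Q}{L^2}\big(2m\eta+\eta^2\big)+O(e^{-cL}).
\]
Since $m\sim L$, this is $\sim Q\eta/L$, not $\sim Q\eta$. Because $Q\le\kappa L$ and $\eta\le1$, the phase increment across one logarithmic scale is always small, at most of order $\kappa$. On your region $\eta\in[\beta/Q,2\beta/Q]$ it is only $\sim\beta/L\to0$. So it never lies in a fixed compact subset of $(0,2\pi)$, \eqref{eq:phase-away} cannot be applied, and the claimed bound $\abs{G(z)-G(w)}\gtrsim L^{1/2}$ fails. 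Making the phase of order one would require $\eta\sim L/Q$, which is large when $Q\ll L$. There the lemma no longer applies, and the weight $e^{-\eta}(1-e^{-\eta})^{-2}$ is exponentially small in $L/Q$.

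Your resulting count $\gtrsim L^2Q$ is in fact impossible. The inequality $\abs{ae^{i\tau\abs a^2}-be^{i\tau\abs b^2}}\le\big(1+2\tau\max(\abs a,\abs b)^2\big)\abs{a-b}$ together with $\norm{g_N}_{L^\infty}^2\lesssim L$ gives $\abs{G(z)-G(w)}\lesssim(1+Q)\abs{g_N(z)-g_N(w)}$. Hence $\norm{G}_{\dot H^{1/2}}\lesssim 1+Q$ by Lemma~\ref{lem:Hs-difference}. So $Q^2$ is the sharp target for the squared norm, and this is what your ``consistency'' worry was detecting.

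The missing idea is to work in the small-angle regime. Keep $\eta$ in a fixed range such as $[\eta_0,2\eta_0]$, as in your first paragraph, so that $dz\,dw/(z-w)^2\sim dm\,d\eta$. There $\Delta\sim Q/L$, and choosing $\kappa$ small forces $0<\Delta\le\frac12$, so $\abs{e^{i\Delta}-1}\ge c\Delta$. Hence
\[
  \abs{G(z)-G(w)}
  \ge\abs{g_N(z)}\abs{e^{i\Delta}-1}-\abs{g_N(w)-g_N(z)}
  \ge(cQ-C)L^{-1/2}\gtrsim QL^{-1/2},
\]
once $Q\ge Q_0$ is large enough to absorb the amplitude change $O(L^{-1/2})$. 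The integrand is therefore only $\sim Q^2/L$. The gain comes from accumulating it over the $\sim L$ logarithmic scales $m\in[L/2,3L/4]$, each carrying weight $O(1)$, which gives $\norm{G}_{\dot H^{1/2}}^2\gtrsim Q^2$. In particular, $\kappa$ keeps the phase in the linear regime and $Q_0$ beats the amplitude error; neither serves to make the phase of order one.
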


\begin{proof}
Fix $\eta_0\in(0,1/10)$, and consider the set of pairs
\begin{align}\label{eq:range}
 R = \bigg \{(z,w)\in \T^2: z=e^{-m},
  \quad
  w=e^{-(m+\eta)},
  \quad
  m\in\Big[\frac L2,\frac{3L}{4}\Big],
  \quad
  \eta\in[\eta_0,2\eta_0] \bigg\}.
\end{align}
Here $0<w<z<1<\pi$, so $d_\T(z,w)=z-w$.
By Lemma~\ref{lem:FN-asymptotics-all}, there are constants $c_1,C_1>0$ such
that
\[
  \abs{g_N(z)}\ge c_1L^{1/2},
  \qquad
  \abs{g_N(w)-g_N(z)}\le C_1L^{-1/2}.
\]
Also, with
\[
  \Delta:=\tau\big(\abs{g_N(w)}^2-\abs{g_N(z)}^2\big),
\]
there are constants $c_2,C_2>0$ such that
\[
  c_2\frac{Q}{L}\le \Delta\le C_2\frac{Q}{L}.
\]
Choose $\kappa\le(2C_2)^{-1}$.  Then, whenever $Q\le\kappa L$,
\[
  0<\Delta\le\frac12,
\]
and hence
\[
  \abs{e^{i\Delta}-1}\ge c_3\Delta\ge c_2c_3\frac{Q}{L}
\]
for an absolute constant $c_3>0$.  Hence
\[
\begin{aligned}
&\abs{
g_N(w)e^{i\tau\abs{g_N(w)}^2}
-g_N(z)e^{i\tau\abs{g_N(z)}^2}
} \\
&\quad\ge
\abs{g_N(z)}\abs{e^{i\Delta}-1}
-\abs{g_N(w)-g_N(z)} \\
& \quad \ge (c_1c_2c_3Q-C_1)L^{-1/2}.
\end{aligned}
\]
Taking $Q_0$ sufficiently large, we obtain
\begin{equation}\label{eq:end-pointwise-all}
  \abs{
g_N(w)e^{i\tau\abs{g_N(w)}^2}
-g_N(z)e^{i\tau\abs{g_N(z)}^2}
}
  \ge c\frac{Q}{L^{1/2}}
\end{equation}
on the chosen region.

The change of variables $(m,\eta)\mapsto(z,w)$ satisfies
\[
  dz\,dw=e^{-2m-\eta}\,dm\,d\eta,
  \qquad
  (z-w)^2=e^{-2m}(1-e^{-\eta})^2.
\]
Since $\eta\in[\eta_0,2\eta_0]$,
\begin{align}\label{eq:cv}
  \frac{dz\,dw}{(z-w)^2}
  =\frac{e^{-\eta}}{(1-e^{-\eta})^2}\,dm\,d\eta
  \sim_{\eta_0}dm\,d\eta.
\end{align}
Although the physical $z$-interval is very small, $m$ is a logarithmic scale
variable, and the singular kernel $(z-w)^{-2}$ cancels the Jacobian loss on
pairs with $w=e^{-\eta}z$.

Let
\[
  H(z):=g_N(z)e^{i\tau\abs{g_N(z)}^2}.
\]
By Lemma~\ref{lem:Hs-difference} with $s=\frac12$,
\[
  \norm{H}_{\dot H^{1/2}(\T)}^2
  \sim
  \iint_{\T\times\T}
  \frac{\abs{H(z)-H(w)}^2}{d_\T(z,w)^2}\,dz\,dw.
\]
We restrict this positive double integral to the set $R$ in \eqref{eq:range}.
Using $d_\T(z,w)=\abs{z-w}$ on $R$, the change of variables \eqref{eq:cv}, and
the pointwise lower bound \eqref{eq:end-pointwise-all}, we get
\[
\begin{aligned}
  \norm{H}_{\dot H^{1/2}(\T)}^2
  &\gtrsim
  \iint_{R}
  \frac{\abs{H(z)-H(w)}^2}{|z-w|^2}\,dz\,dw \\
  & =
  \int_{L/2}^{3L/4}\int_{\eta_0}^{2\eta_0}
  \abs{H(e^{-(m+\eta)})-H(e^{-m})}^2
  \frac{e^{-\eta}}{(1-e^{-\eta})^2}\,d\eta\,dm \\
  &\gtrsim_{\eta_0}
  \int_{L/2}^{3L/4}\int_{\eta_0}^{2\eta_0}
  \frac{Q^2}{L}\,d\eta\,dm
  \gtrsim_{\eta_0}
  \frac{Q^2}{L}\cdot L
  \gtrsim Q^2.
\end{aligned}
\]
Taking square roots proves the proposition.
\end{proof}

\begin{proof}[Proof of Theorem~\ref{thm:main} when $s=\frac12$]
Let $\varepsilon\in(0,1)$.  By \eqref{eq:g-Hhalf-all} and
Lemma~\ref{lem:T2-to-T}, there is $C_*>0$ such that
\[
  \norm{g_N(x-y)}_{H^{1/2}(\T^2)}\le C_*
\]
for all sufficiently large $N$.  Choose $\rho=a\varepsilon$ with $aC_*<1$.
Then
\[
  \norm{\rho g_N(x-y)}_{H^{1/2}(\T^2)}<\varepsilon.
\]
Let $c_{\mathrm{end}}>0$ be the constant obtained by combining
Proposition~\ref{prop:end-lower}, Lemma~\ref{lem:T2-to-T}, and the trivial
lower bound $\norm{h}_{H^{1/2}}\ge\norm{h}_{\dot H^{1/2}}$.  Choose
\[
  Q:=A\varepsilon^{-2},
\]
where $A\ge Q_0$ is so large that
\[
  c_{\mathrm{end}}aA>1.
\]
Choose $N\ge N_0$ so large that
\[
  L=\log N
  \ge
  \max\biggl\{\frac{2Q}{\kappa},\frac{2Q}{\rho^2\varepsilon}\biggr\},
\]
and define
\[
  t_\varepsilon:=\frac{Q}{\rho^2L}.
\]
Here $Q$ is the large parameter in Proposition~\ref{prop:end-lower}, whereas
the phase parameter is $\tau=t_\varepsilon\rho^2=Q/L$.  Increasing $N$ allows
us to keep $\tau$ small even though $Q$ is large.
Then $0<t_\varepsilon<\varepsilon$ and
\[
  t_\varepsilon\rho^2L=Q\le \kappa L.
\]
Set
\[
  u_0(x,y):=\rho g_N(x-y).
\]
The exact solution is
\[
  u(t,x,y)=\rho g_N(x-y)
  e^{it\rho^2\abs{g_N(x-y)}^2}.
\]
Applying Proposition~\ref{prop:end-lower} with
$\tau=t_\varepsilon\rho^2$ gives
\[
  \norm{u(t_\varepsilon)}_{H^{1/2}(\T^2)}
  \ge c_{\mathrm{end}}\rho Q
  =c_{\mathrm{end}}aA\varepsilon^{-1}
  >\varepsilon^{-1}.
\]
This proves the endpoint case. 

\smallskip

Together with the negative and subcritical
constructions in sections \ref{sec:negative} and \ref{sec:subcritical}, this proves norm inflation for
$s\in(-\infty,0)\cup(0,\frac12]$. Conversely, when $s=0$, norm inflation is
excluded by conservation of the $L^2$ norm. When $s>\frac12$, the analytic
local well-posedness result of \cite{WangHNLS} gives a solution map continuous
at the origin, and
therefore precludes norm inflation at the origin. This completes the proof of
Theorem~\ref{thm:main}.
\end{proof}

\begin{ackno}\normalfont
S.S. was supported in part by the National Key R\&D Program of China under Grant 2024YFA1015500, NSF of China under Grant 12501322, and Anhui Provincial NSF 2508085QA001. Y.W. was supported by the EPSRC Mathematical Sciences Small
Grant (grant no. UKRI1116).
\end{ackno}

\end{document}